\date{}
\newcommand{\V}[1]{\mbox{\boldmath $ #1 $}}
\newcommand{\ud}{\mathrm{d}}
\newcommand{\bey}{\begin{eqnarray}}
\newcommand{\eey}{\end{eqnarray}}
\newcommand{\beq}{\begin{equation}}
\newcommand{\eeq}{\end{equation}}
\theoremstyle{plain}% default
\newtheorem{theorem}{\hspace{6mm}Theorem}[section]
\newtheorem{lemma}{\hspace{6mm}Lemma\,}[section]
\theoremstyle{remark}
\newtheorem{example}{\hspace{6mm}Example}[section]
\title{New finite volume element schemes based on a two-layer dual strategy
}
\author{Weizhang Huang%
\thanks{Department of Mathematics, The University of Kansas, Lawrence, KS 66045 (whuang@ku.edu).}
\and
Xiang Wang%
\thanks{Corresponding author. School of Mathematics, Jilin University, Changchun 130012, China (wxjldx@jlu.edu.cn).}
\and Xinyuan Zhang%
\thanks{Pingshan Foreign Language School, Shenzhen 518118, China and School of Mathematics, Jilin University, Changchun 130012, China (zxy1zhang@163.com).}
}
\begin{document}
\maketitle

\begin{abstract}
A two-layer dual strategy is proposed in this work to construct a new family of high-order finite volume element (FVE-2L) schemes 
that can avoid main common drawbacks of the existing high-order finite volume element (FVE) schemes.
The existing high-order FVE schemes are complicated to construct since 
the number of the dual elements in each primary element used in their construction
increases with a rate $O((k+1)^2)$, where $k$ is the order of the scheme.
Moreover, all $k$th-order FVE schemes require a higher regularity $H^{k+2}$ than the approximation theory
for the $L^2$ theory. Furthermore, all FVE schemes lose local conservation properties over boundary dual elements
when dealing with Dirichlet boundary conditions. 
The proposed FVE-2L schemes has a much simpler construction since they
have a fixed number (four) of dual elements in each primary element. They also
reduce the regularity requirement for the $L^2$ theory to $H^{k+1}$ and preserve the local conservation law on all dual elements
of the second dual layer for both flux and equation forms. Their stability and $H^1$ and $L^2$ convergence are proved.
Numerical results are presented to illustrate the convergence and conservation properties of the FVE-2L schemes.
Moreover, the condition number
of the stiffness matrix of the FVE-2L schemes for the Laplacian operator is shown to have the same growth rate
as those for the existing FVE and finite element schemes.
\end{abstract}

\noindent
\textbf{ AMS 2020 Mathematics Subject Classification.}
65N08, 65N12, 65N30

\noindent{\textbf{ Key Words.}}
Finite volume, two-layer dual mesh, conservation, $L^2$ estimate, minimum angle condition.

\noindent{\textbf{ Abbreviated title.}}
Two-layer finite volume element schemes

%----------------------- section 1 ---------------------------
\section{Introduction}
The finite volume element (FVE) method \cite{AlKubaisy.2023,Bank.1987,Bi.2007,Cai.1991,Chen.2010,Chen.2015,Cui.2010,Ewing.2002,Huang.1998,Li.2000,Li.2012b,Nie.2022,Su.2023,Wang.2014}, also known as the generalized difference method, is a type of the finite volume method 
\cite{Cui.2010,Fambri.2023,Kwon.2022,Sheng.2022,Xu.2009,Yang.2022,Yang.2013,Zhang.2019,Zhang.2015,Zou.2017} that approximates the solution of partial differential equations (PDEs) in a finite element space. It inherits many advantages of both the finite element method, such as a straightforward definition of the gradient, and the finite volume method, such as the famous local conservation law.
Till now, much progress has been made
in the algorithmic development \cite{Chen.2010,Chen.2012,Huang.1998,Li.2000,Wang.2016},
stability analysis and $H^1$ estimation \cite{Chen.2012,Chen.2015,Chou.2007,Liebau.1996,Suli.1991,Xu.2009,Zhang.2015,Zhou.2020},
$L^2$ estimation \cite{Chen.2010,Chen.2002,Lin.2015,Lv.2012,Wang.2016,Yang.2023b,Zhang.2024,Zhang.2023},
and superconvergence analysis \cite{Cao.2015,Wang.2019,Wang.2021b}.
Nevertheless, there are still some open issues that have to be addressed.

Firstly, it is still complicated to construct high-order FVE schemes.
Like other finite volume (FV) schemes, FVE schemes form their approximation equations by integrating the weak formulation of the underlying
partial differential equations over dual elements. A commonly used strategy in FVE schemes is to define a dual element around each degree
of freedom for Lagrange-type schemes. Thus, for each primary element, there are $(k+1)(k+2)/2$ dual elements for a $k$th-order FVE scheme
over triangular meshes \cite{Chen.2012,Wang.2016,Xu.2009} and $(k+1)^{2}$ dual elements for a bi-$k$th-order FVE scheme
over quadrilateral meshes \cite{Lv.2012,Zhang.2015,Zhang.2023}. 
It becomes increasingly complicated and computationally burdensome to partition a primary into so many dual regions even increasing $k$ to
$3$ and $4$.

Secondly, existing high-order FVE schemes require a higher regularity ($u\in H^{k+2}$) than
what is needed for function approximation ($u\in H^{k+1}$) for the $L^2$ theory.
The optimal $L^2$ convergence rate of a FVE scheme depends on the choice of the dual strategy.
A unified $L^2$ analysis for FVE schemes on quadrilateral meshes has been provided in \cite{Lin.2015}
by establishing some numerical quadrature equivalence, and the $L^2$ result for high-order FVE schemes
on triangular meshes has been proved in \cite{Wang.2016} by proposing an orthogonality condition.
Some other $L^2$ results for high-order FVE schemes can be found in \cite{Lv.2012,Zhang.2023}.
However, all of the above $L^2$ results require $u\in H^{k+2}$ for $k$th-order ($k\geq2$) FVE schemes,
which is a higher regularity requirement than $u\in H^{k+1}$ of the approximation theory. 

Thirdly, Dirichlet boundary conditions may disrupt the conservation property on boundary dual elements in existing FVE schemes.
To illustrate this, we take the following elliptic boundary value problem (BVP)
on a bounded polygonal domain $\Omega \subset \mathbb{R}^2$ as an example,
\begin{align}
\label{eq:second_order_elliptic_problem}
\left\{
\begin{array}{rl}
-\nabla \cdot (\mathbb{D} \nabla u) &= f, \quad \mathrm{in} \, \Omega, \\
u &= 0, \quad \mathrm{on} \, \partial \Omega,
\end{array}
\right.
\end{align}
where $f \in L^2(\Omega)$, and the diffusion tensor $\mathbb{D} = (d_{ij})_{i,j=1,2}$ is bounded by
\begin{align*}
\gamma_1(\xi,\xi) \leq (\mathbb{D}\xi,\xi) \leq \gamma_2(\xi,\xi), \quad \forall \, \xi \in \mathbb{R}^2 ,
\end{align*}
and $\gamma_1$ and $\gamma_2$ are positive constants.
Integrating the first equation in (\ref{eq:second_order_elliptic_problem}) over a dual element $K^{*}$, one has the local conservation law in equation form (with discretization) given by
\begin{align}
\label{eq:conservation-law-equation}
-\iint_{K^{*}}\nabla\cdot(\mathbb{D}\nabla u_h)\,\ud x\ud y=\iint_{K^{*}}f\,\ud x\ud y,
\end{align}
or the local conservation law in flux form (with discretization after applying the divergence theorem) given by
\begin{align}
\label{eq:conservation-law-flux}
-\int_{\partial K^{*}}(\mathbb{D}\nabla u_h)\cdot \vec{n}\,\ud s=\iint_{K^{*}}f\,\ud x\ud y.
\end{align}
These local conservation laws may not be preserved by existing FVE schemes when Dirichlet boundary conditions are used.

The objective of this work is to present a new dual strategy (called a two-layer dual strategy)
to construct FVE schemes (FVE-2L) on triangular meshes that can avoid the above mentioned issues of the existing FVE schemes.
More specifically, the dual meshes of these schemes consist of the barycenter dual mesh of the linear FVE scheme
(caleld the first dual layer) and the triangulation of the primary mesh (called the second dual layer).
Thus, the FVE-2L schemes have a fixed number (four)
of dual elements on each primary triangular element regardless of the order of the scheme.
This greatly simplifies the construction of the dual mesh, and therefore FVE schemes,
and makes the implementation of the schemes
more efficiently and less burdensome. Moreover, due largely to the use of two dual layers,
it is showed that the regularity requirement for the $L^2$ theory of FVE-2L schemes is reduced
to $u\in H^{k+1}$, which is consistent with the approximation theory.
Furthermore, FVE-2L schemes preserve (\ref{eq:conservation-law-flux}) on all dual elements of the second dual layer
because the interpolation nodes corresponding to this layer are all interior nodes. Since the numerical solution $u_{h}$
is continuously differentiable on all triangular elements of the primary mesh, FVE-2L schemes preserve the local conservation
law in equation form (\ref{eq:conservation-law-equation})  as well on dual elements of the second dual layer.
As a result, the global conservation law in both flux and equation forms is preserved on the second dual layer.
FVE-2L schemes behave more or less like existing FVE schemes on the first dual layer.

The stability and $H^1$ and $L^2$ convergence of the FVE-2L schemes are analyzed in this work.
A unified framework of \cite{Chen.2012} developed for the stability of FVE schemes with
a single dual layer on triangular meshes is used for this purpose.
It is worth mentioning that the application of the framework to our current case is not trivial.
The main difficulty is that the framework requires a matrix associated with
the trial-to-test mapping to be positive definite when the underlying triangular element is equilateral.
Unfortunately, this cannot be achieved for our current situation if a single trial-to-test mapping is used
(as done in \cite{Chen.2012}). To circumvent this difficulty, we introduce a family of trial-to-test mapping
with parameters and define a minmax optimization problem for the lower bound for the minimum angle condition.
The detail of the stability analysis is given in Section~\ref{sec:Coercivity_and_boundedness}.

It is worth emphasizing that all of the existing FVE schemes use single-layer dual meshes while the FVE-2L schemes developed in this work are based on two-layer dual meshes. 

There are hybrid finite volume methods (HFVM) (e.g. see \cite{Abgrall.2023,AlKubaisy.2023,Fambri.2023,Myers.2023})
in the literature. These methods combine a finite volume method (FVM) with other numerical techniques, such as the finite element method (FEM)
and particle methods, among others. They are commonly applied to the discretization of different parts of the computational domain
or different physical quantities. The FVMs utilized within HFVM are often low-order schemes.
The FVE-2L schemes employ finite element spaces as approximation spaces, enabling the construction
of high-accuracy numerical schemes. The FVE-2L schemes can serve as one of the components in constructing an HFVM scheme.

The remainder of this paper is organized as follows. Section~\ref{sec:FVE2L_schemes} is devoted to the description of two-layer dual meshes and
the FVE-2L schemes for elliptic problems. In Section~\ref{sec:conservation_law}, the conservation properties of the FVE-2L methods are discussed.
The stability analysis of the FVE-2L schemes is given in Section~\ref{sec:Coercivity_and_boundedness}, followed by the $H^1$ and $L^2$ error analysis in Section~\ref{sec:L2 estimate}. Numerical examples are presented in Section~\ref{sec:numerical experiments} to
the conservation properties, the optimal convergence rates, and condition number for the FVE-2L schemes. Finally,
conclusions are drawn in Section~\ref{sec:conclusion}.

\section{A two-layer dual strategy and FVE-2L schemes}
\label{sec:FVE2L_schemes}

In this section we describe a two-layer dual strategy and the corresponding FVE-2L schemes.
To be specific, we focus on BVP (\ref{eq:second_order_elliptic_problem}) in this work.

We recall that FVE schemes typically form their approximation equations
by integrating the weak formulation of the underlying PDEs over dual elements
(cf. (\ref{eq:FVE-scheme0})) and define a dual element around each degree of freedom.
For each primary element, this requires $(k+1)(k+2)/2$ dual elements
for a $k$th-order FVE scheme over triangular meshes \cite{Chen.2012,Wang.2016,Xu.2009}.
It becomes increasingly complicated and computationally burdensome to partition a primary element into so many dual regions
for higher-order accuracy; see Fig.~\ref{fig:single-layered-strategy2}.
This can be avoided with the two-layer dual strategy described in this section.

We start with describing the primary and dual meshes and function spaces used in the two-layer dual strategy.

\begin{figure}[!htbp]
    \centering
    \subfigure{
    \begin{minipage}[t]{.4\textwidth}
      \centering
      {(a) Quadratic}\\
      \includegraphics*[width=100pt]{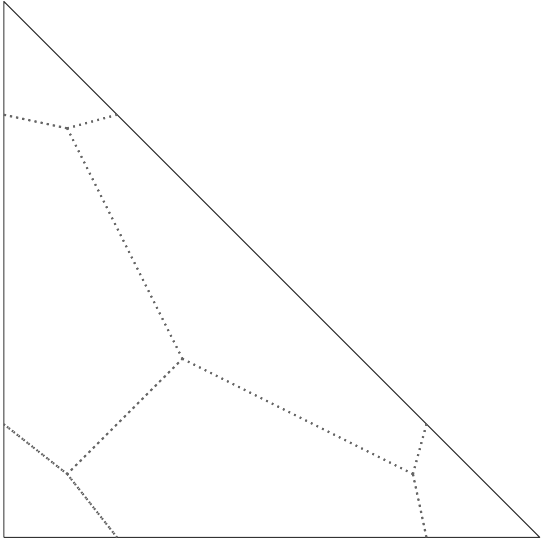}
    \end{minipage}}
    \subfigure{
    \begin{minipage}[t]{.4\textwidth}
    \centering
    {(b) Cubic}\\
      \includegraphics[width=100pt]{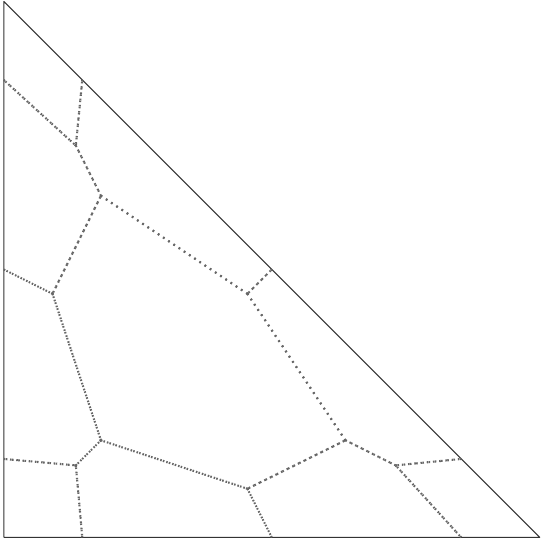}
    \end{minipage}}\\
    \subfigure{
    \begin{minipage}[t]{.4\textwidth}
      \centering
      {(c) Quartic}\\
      \includegraphics*[width=100pt]{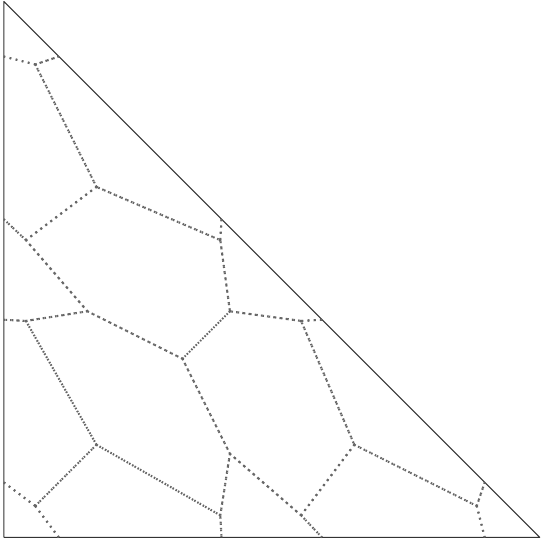}
    \end{minipage}}
    \subfigure{
    \begin{minipage}[t]{.4\textwidth}
    \centering
    {(d) Quintic}\\
      \includegraphics[width=100pt]{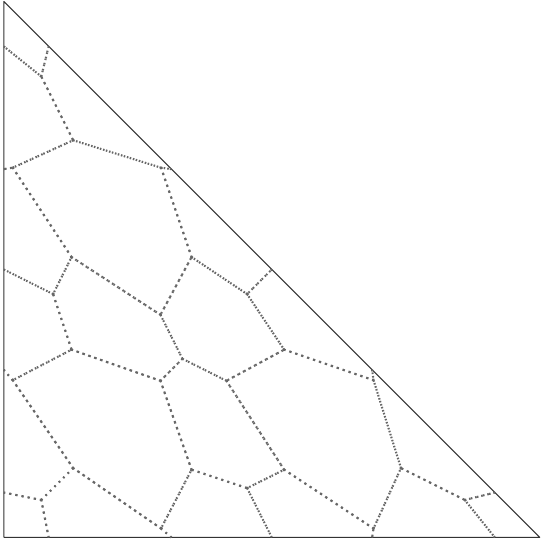}
    \end{minipage}}
    \caption{Dual elements/regions on the reference element for single-layer FVEs (cf. \cite{Wang.2016}).}
    \label{fig:single-layered-strategy2}
\end{figure}

\subsection{Primary meshes and trial spaces}
\label{subsec:primary mesh and trial space}

Let $\mathcal{T}_h = \{ K \}$ be a triangular mesh of $\Omega$, where $h$ denotes the mesh size.
The standard $k$th-order ($k\geq 2$) Lagrange finite element space over $\mathcal{T}_{h}$ is given by
\begin{align*}
U_h^{k}=\{ u_h\in C(\Omega) :\quad u_h|_{K} \in P^k(K),\quad \forall K \in \mathcal{T}_h,\quad u_h|_{\partial \Omega} =0\},
\end{align*}
where $P^k(K)$ is the set of polynomials of degree up to $k$ defined on $K$.
It is taken as the trial space for FVE-2L schemes. The exception is 
for the quadratic ($k=2$) case where there is an additional bubble function on each element, i.e.,
\begin{align*}
U_h^{2+b}=\{ u_h\in C(\Omega) :\quad u_h|_{K} \in P^2(K)\oplus \lambda_1\lambda_2\lambda_3,
\quad \forall K \in \mathcal{T}_h,\quad  u_h|_{\partial \Omega} =0\} .
\end{align*}
Here, $(\lambda_1, \lambda_2, \lambda_3)$ are the area coordinates of point $(x,y)\in K$.
The reason for this addition is that each FVE-2L scheme requires at least one degree of freedom
inside each element.
For notational simplicity, the trial space of FVE-2L schemes is written as
\begin{align*}
U_h=
\left\{
\begin{array}{ll}
U_h^{2+b}, &\text{ for }k=2, \\
U_h^{k}, &\text{ for }k > 2.
\end{array}
\right.
\end{align*}

\subsection{Dual meshes and test spaces}\label{subsec:Dual meshes and test spaces}

In contrast with the existing FVE schemes that use single-layer dual meshes, the FVE-2L schemes proposed in this work
use dual meshes of two layers. Each layer constitutes a complete partition of the domain $\Omega$.
In the following we describe dual meshes and corresponding test spaces in detail.

\begin{figure}[!htbp]
    \centering
    \subfigure{
    \begin{minipage}[t]{.47\textwidth}
      \centering
      {(a) The first dual layer}\\
      \includegraphics*[width=120pt]{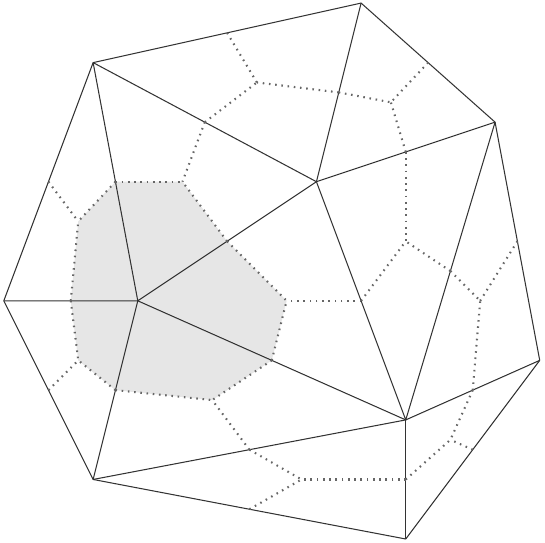}
    \end{minipage}}
    \subfigure{
    \begin{minipage}[t]{.47\textwidth}
    \centering
    {(b) The second dual layer}\\
      \includegraphics[width=120pt]{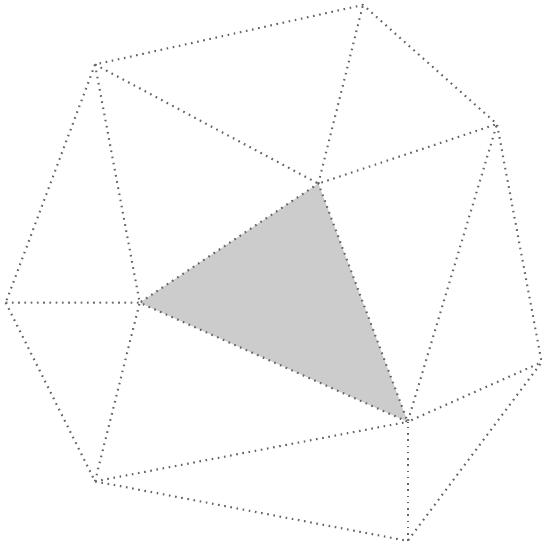}
    \end{minipage}}
    \caption{Examples of dual elements (shaded regions, $K_{\mathrm{I}}^{*}$ (left) and $K_{\mathrm{II}}^{*}$ (right)) for each dual layer.}
    \label{fig:two-layer-strategy}
\end{figure}

\textbf{Two-layer dual meshes.} The first dual layer is selected as the barycenter dual mesh of the linear FVE scheme
(see Fig.~\ref{fig:two-layer-strategy} (a)). Each dual element is composed of several quadrilaterals. These quadrilaterals correspond to
$Q_{i}$ $(i=1,\, 2,\, 3)$ on the reference element $\hat{K}=\{(x,y):\; x\geq0,\,y\geq0,\,x+y\leq1\}$,
\begin{align*}
Q_1&=\blacklozenge\{(0,0),\,(1/2,0),\,(1/3,1/3),\,(0,1/2)\},\\
Q_2&=\blacklozenge\{(1,0),\,(1/2,1/2),\,(1/3,1/3),\,(1/2,0)\},\\
Q_3&=\blacklozenge\{(0,1),\,(1,1/2),\,(1/3,1/3),\,(1/2,1/2)\}.
\end{align*}
Obviously, $Q_{1}\cup Q_{2}\cup Q_{3}=\hat{K}$. We call this barycenter dual mesh
the first dual layer and denote it by $\mathcal{T}_{\mathrm{I}}^{*}=\{K_{\mathrm{I}}^{*}\}$.

The second dual layer is taken as the primary mesh (see Fig.~\ref{fig:two-layer-strategy} (b)) and thus, each
triangle of the primary mesh serves as a dual element of the second dual layer.  We denote
\begin{align*}
Q_4& =\blacktriangle\{(0,0),\,(1,0),\,(0,1)\}=\hat{K}.
\end{align*}
This second dual layer is denoted by
$\mathcal{T}_{\mathrm{II}}^{*}=\{K_{\mathrm{II}}^{*}\}$ and the total dual mesh is denoted
by $\mathcal{T}_h^{*}=(\mathcal{T}_{\mathrm{I}}^{*};\,\mathcal{T}_{\mathrm{II}}^{*})$.
Hereafter, $K^{*}\in\mathcal{T}_h^{*}$ is used to denote a general dual element $K^{*}$
in either $\mathcal{T}_{I}^{*}$ or $\mathcal{T}_{II}^{*}$. Notice that $K^{*}$ is either a polygon or a triangle.

\begin{figure}[!htbp]
    \centering
    \subfigure{
%    \rotatebox{90}{\scriptsize{~~~~~~~~Interpolation nodes}}
    \begin{picture}(-3,-3)
    \put(30,130){Quadratic}
    \end{picture}
    \begin{minipage}[t]{.30\textwidth}
      \centering
      \includegraphics*[width=170pt]{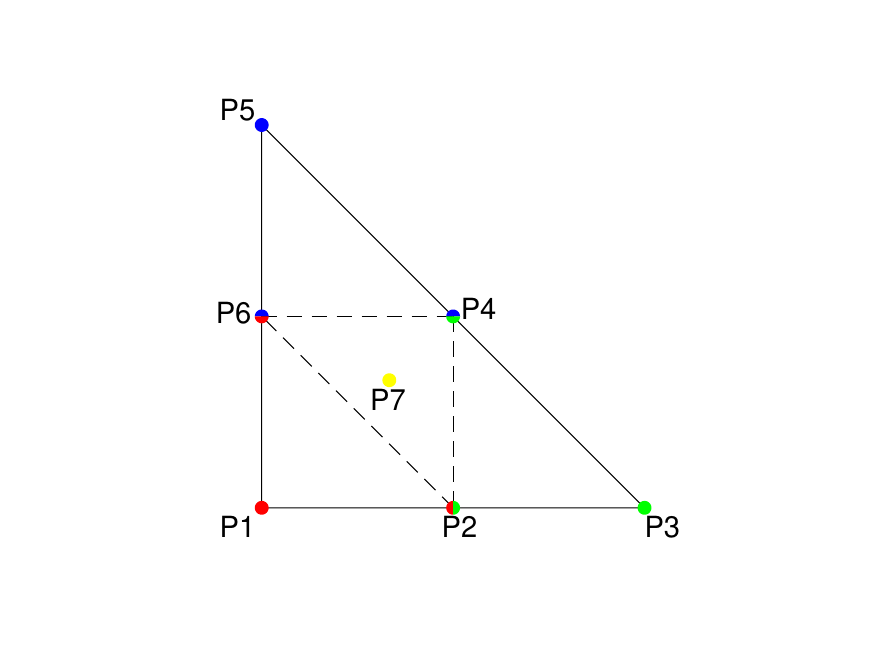}
    \end{minipage}}
    \subfigure{
    \begin{picture}(-3,-3)
    \put(30,130){Cubic}
    \end{picture}
    \begin{minipage}[t]{.30\textwidth}
    \centering
      \includegraphics[width=170pt]{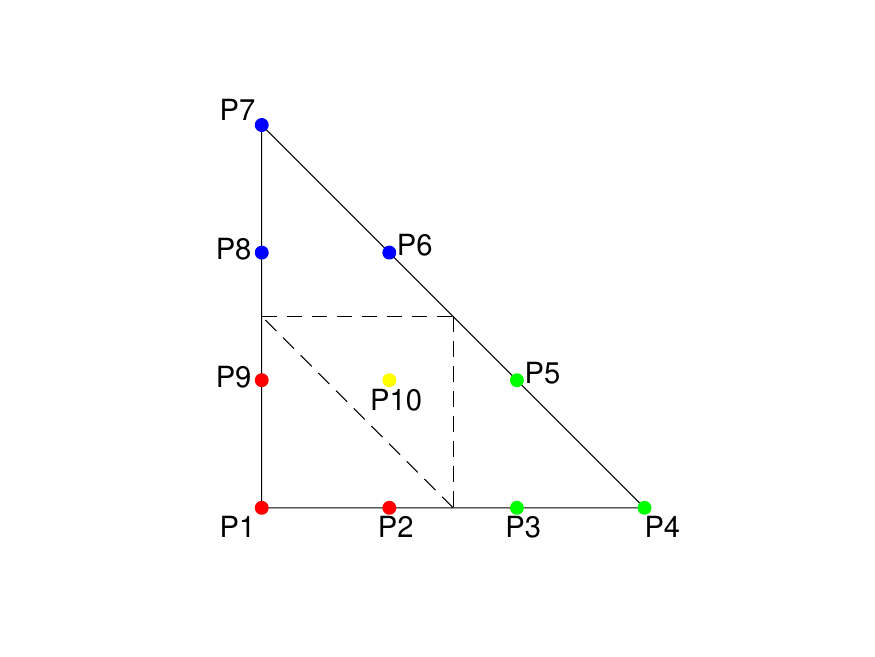}
    \end{minipage}}
    \subfigure{
    \begin{picture}(-3,-3)
    \put(30,130){Quartic}
    \end{picture}
    \begin{minipage}[t]{.30\textwidth}
    \centering
      \includegraphics[width=170pt]{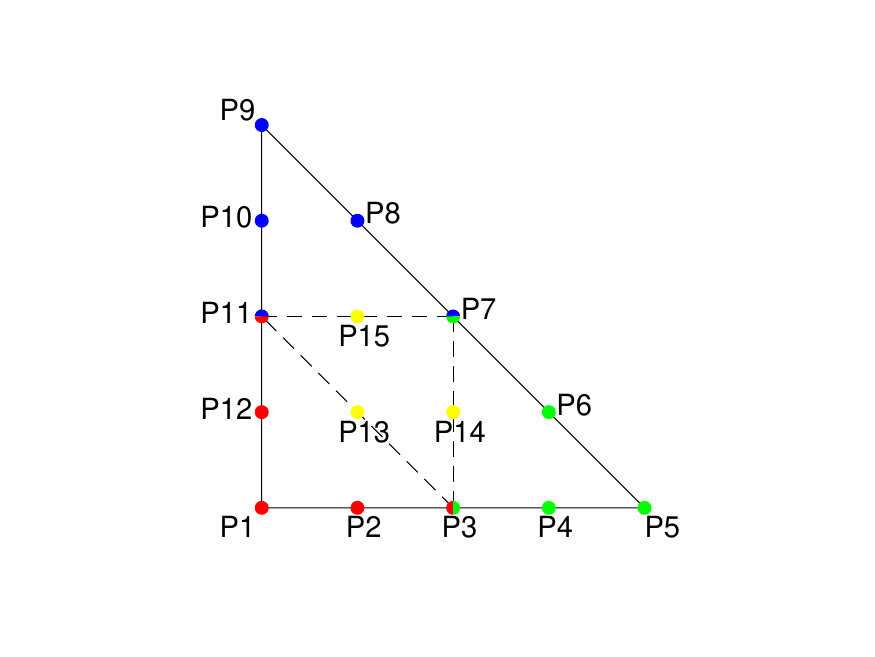}
    \end{minipage}}\\
    \caption{Interpolation nodes on the reference element $\hat{K}$. The colored points refer to the interpolation nodes on $Q_1$ (red), $Q_2$ (green), $Q_3$ (blue), and $Q_4$ (yellow). Points with two colors are shared freedoms.}
    \label{fig:nodes-4-FVE-2L-schemes}
\end{figure}

\textbf{Test spaces.}
For each FVE-2L scheme, there are two test function spaces corresponding to the two layers of the dual mesh.
The degrees of freedom and dual regions on the reference element $\hat{K}$ are showed
in Fig.~\ref{fig:nodes-4-FVE-2L-schemes} and Table~\ref{tab:nodes-function-space} for the cases of $k=2,\,3,\,4$.
The analytical expressions of the test basis functions are given in Appendix~\ref{SEC:test-basis-functions}.
We refer to Fig.~\ref{fig:nodes-4-FVE-2L-schemes} and Table~\ref{tab:nodes-function-space}
for the notation of interpolation nodes, dual regions $Q_{i}$ ($i=1,...,4$), sets of interpolation nodes thereon,
and test spaces.

Next, we provide some detailed explanations of the test spaces.

\begin{table}[htbp!]
\centering
  \caption{The interpolation nodes and function spaces. $P^{2-\lambda\lambda}(Q_{i})$  is the incomplete
  quadratic polynomial space on $Q_{i}$ that excludes the quadratic basis functions $\lambda_{i_{1}}\lambda_{i_{2}}$
  ($i_{1},i_{2}\in\{1,2,3\}\setminus\{i\}$, $i_{1}<i_{2}$).}
\label{tab:nodes-function-space}
 % \begin{threeparttable}
  \begin{tabular}{ c | c | c | l | c }
  \toprule
  \hline
  FVE-2L schemes & dual layer  & area & interpolation nodes & test function spaces\\
  \hline
  \multirow{4}*{quadratic}&\multirow{3}*{first dual layer}&$Q_{1}$&$\mathcal{N}_{1}=\{P_1,P_2,P_6\}$&\multirow{3}*{$P^1(Q_{i})$ $(i=1,2,3)$}\\
  \cline{3-4}
  &&$Q_{2}$&$\mathcal{N}_{2}=\{P_2,P_3,P_4\}$\\
  \cline{3-4}
  &&$Q_{3}$&$\mathcal{N}_{3}=\{P_4,P_5,P_6\}$\\
  \cline{2-5}
  &second dual layer&$Q_{4}$&$\mathcal{N}_{4}=\{P_7\}$&$P^0(Q_{4})$\\
  \hline
  \multirow{4}*{cubic}&\multirow{3}*{first dual layer}&$Q_{1}$&$\mathcal{N}_{1}=\{P_1,P_2,P_9\}$&\multirow{3}*{$P^1(Q_{i})$ $(i=1,2,3)$}\\
  \cline{3-4}
  &&$Q_{2}$&$\mathcal{N}_{2}=\{P_3,P_4,P_5\}$\\
  \cline{3-4}
  &&$Q_{3}$&$\mathcal{N}_{3}=\{P_6,P_7,P_8\}$\\
  \cline{2-5}
  &second dual layer&$Q_{4}$&$\mathcal{N}_{4}=\{P_{10}\}$&$P^0(Q_{4})$ \\
  \hline
  \multirow{4}*{quartic}&\multirow{3}*{first dual layer}&$Q_{1}$&$\mathcal{N}_{1}=\{P_1,P_2,P_3,P_{11},P_{12}\}$   
            &\multirow{3}*{      $P^{2-\lambda\lambda}(Q_{i})$   $(i=1,2,3)$}\\
  \cline{3-4}
  &&$Q_{2}$&$\mathcal{N}_{2}=\{P_3,P_4,P_5,P_6,P_7\}$\\
  \cline{3-4}
  &&$Q_{3}$&$\mathcal{N}_{3}=\{P_7,P_8,P_9,P_{10},P_{11}\}$\\
  \cline{2-5}
  &second dual layer&$Q_{4}$&$\mathcal{N}_{4}=\{P_{13},P_{14},P_{15}\}$&$P^1(Q_{4})$\\
  \hline
  \bottomrule
  \end{tabular}
% \end{threeparttable}
\end{table}

Each interpolation node on the first layer of the dual mesh corresponds to a test function for the first dual layer.
Such a test function has the support including the quadrilaterals sharing the node and is defined
separately on each of these quadrilaterals. Consider the test space on the reference element $\hat{K}$.
For $i\in \{ 1,\, 2,\, 3\}$, a test basis function $\hat{\psi}_j$ associated with $P_{j}\in \mathcal{N}_{i}$
and restricted on $Q_{i}$ is the Lagrange interpolation polynomial satisfying
\begin{align}
\label{eq:test function}
\hat{\psi}_j(P_s)&= \delta_{j,s}, 
\quad \forall P_{s}\in \mathcal{N}_{i} .
%\quad P_{k}\in \mathcal{N}_{i}\quad\mathrm{on}\,\,Q_{i},\,\,i=1,2,3.
\end{align}
Notice that $\hat{\psi}_j|_{Q_{i}}$ belongs to $P^{1}(Q_{i})$ for both quadratic and cubic FVE-2L schemes and $P^{2-\lambda\lambda}(Q_{i})$ for the quartic FVE-2L scheme. 
Moreover, $P^{2-\lambda\lambda}(Q_{i}) =\mathrm{Span} \{\lambda_{1},\,\lambda_{2},\,\lambda_{3},\,\lambda_{i}\lambda_{i_{1}},\,\lambda_{i}\lambda_{i_{2}}\}$ ($i_{1},i_{2}\in\{1,2,3\}\setminus\{i\}$, $i_{1}<i_{2}$) is an incomplete quadratic polynomial space on $Q_{i}$
 that excludes quadratic basis functions $\lambda_{i_{1}}\lambda_{i_{2}}$.
Thus, the test space on $\hat{K}$ for the first dual layer is given by
\begin{align}   \label{eq:VKI}
V_{\hat{K},\mathrm{I}}&=\{v_{\hat{K},\mathrm{I}}:\quad v_{\hat{K},\mathrm{I}}=\sum_{P_j\in\mathcal{N}_{1}\cup\mathcal{N}_{2}\cup\mathcal{N}_{3}} v_j\hat{\psi}_j\}.
\end{align}
The degrees of freedom of $V_{\hat{K},\mathrm{I}}$ is \#$(\mathcal{N}_{1}\cup\mathcal{N}_{2}\cup\mathcal{N}_{3})=3k$ for the $k$th-order FVE-2L scheme.

From Fig.~\ref{fig:nodes-4-FVE-2L-schemes} and Table~\ref{tab:nodes-function-space}, we can see that
there are some shared degrees of freedom between different dual quadrilaterals (first dual layer) for quadratic and quartic FVE-2L schemes.
In this case, the test basis functions are defined on each dual quadrilateral separately, and they are continuous
at the shared midline. For example, $\hat{\psi}_2$ for the quadratic FVE-2L scheme is continuous on the segment $\overline{P_{2}P_{7}}$;
cf. Fig.~\ref{fig:nodes-4-FVE-2L-schemes}. From this perspective, FVE-2L schemes are different from the existing FVE schemes and
do not pursue completely independent (discontinuous) test functions on each dual element of the first dual layer.

A test function for the second dual layer is a polynomial on each dual element $K_{\mathrm{II}}^{*}\in\mathcal{T}_{\mathrm{II}}^{*}$.
For $P_{j}\in \mathcal{N}_{4}$, $\hat{\psi}_j$ is defined as the Lagrange interpolation polynomial on $Q_{4}$ such that
\begin{align}
\label{eq:test function4}
\hat{\psi}_j(P_s)&= \delta_{j,s}, 
\quad \forall P_{s}\in \mathcal{N}_{4}.
\end{align}
Notice that  $\hat{\psi}_j|_{Q_{4}}$ belongs to $P^{0}(Q_{4})$ for both the quadratic and cubic FVE-2L schemes
and $P^{1}(Q_{4})$ for the quartic FVE-2L scheme. Thus, the test space on $\hat{K}$ for the second dual layer is given by
\begin{align}
\label{eq:VKII}
V_{\hat{K},\mathrm{II}}&=\{v_{\hat{K},\mathrm{II}}:\quad v_{\hat{K},\mathrm{II}}=\sum_{P_j\in\mathcal{N}_{4}} v_j\hat{\psi}_j\}.
\end{align}
The degrees of freedom of $V_{\hat{K},\mathrm{II}}$ is \#$\mathcal{N}_{4}$, which is 1 for the quadratic and cubic FVE-2L schemes
and 3 for the quartic FVE-2L scheme.
Note that the number of the degrees of freedom of $V_{\hat{K},\mathrm{I}}$ plus those of $V_{\hat{K},\mathrm{II}}$
marches  the number of the degrees of freedom of the trial function space on $\hat{K}$.

The test function spaces on $K$, $V_{K,\mathrm{I}}$ and $V_{K,\mathrm{II}}$, can be obtained from (\ref{eq:VKI}) and (\ref{eq:VKII})
through the affine mapping from $\hat{K}$ to $K$.
The test spaces for the first and second dual layers of the dual mesh can be denoted by
\begin{align*}
V_{\mathrm{I}}&=\{v_{\mathrm{I}}:\,v_{\mathrm{I}}|_{K}\in V_{K,\mathrm{I}}\,\,\,\forall K\in\mathcal{T}_{h}
                    \quad \mathrm{and} \quad  v_{\mathrm{I}}|_{K_{\mathrm{I}}^{*}}\in C(K_{\mathrm{I}}^{*})\,\,\,\forall K_{\mathrm{I}}^{*}\in\mathcal{T}_{\mathrm{I}}^{*} \},    \\
V_{\mathrm{II}}&=\{v_{\mathrm{II}}:\, v_{\mathrm{II}}|_{K}\in V_{K,\mathrm{II}}\,\,\,\forall K\in\mathcal{T}_{h} \}.
\end{align*}
For convenience, we also use the notation
\begin{align*}
V_{h}=\{(v_{\mathrm{I}};\,v_{\mathrm{II}}):\,v_{\mathrm{I}}\in V_{\mathrm{I}},\,\,v_{\mathrm{II}}\in V_{\mathrm{II}}\}.
\end{align*}

\subsection{High-order FVE-2L schemes}

A FVE-2L scheme can be obtained by multiplying PDE (\ref{eq:second_order_elliptic_problem})
with a test function, integrating it on a dual element, and applying the divergence theorem (integration by parts).
The scheme can be expressed as finding $u_h\in U_h$ such that
\begin{align}
\label{eq:FVE-scheme0}
\left\{
\begin{array}{rll}
a^{*}_{\mathrm{I}}(u_h,\,v_{\mathrm{I}}) &= (f,\,v_{\mathrm{I}}),    &\forall v_{\mathrm{I}}\in V_{\mathrm{I}},\\
a^{*}_{\mathrm{II}}(u_h,\,v_{\mathrm{II}})&=(f,\,v_{\mathrm{II}}),   &\forall v_{\mathrm{II}}\in V_{\mathrm{II}},
\end{array}
\right.
\end{align}
where
\begin{align*}
a^{*}_{\mathrm{I}}(u_h,\,v_{\mathrm{I}})
    &
            =\sum_{K_{\mathrm{I}}^{*}\in \mathcal{T}_{\mathrm{I}}^{*}}(\iint_{K_{\mathrm{I}}^{*}}(\mathbb{D}\nabla u_h)\cdot\nabla v_{\mathrm{I}}\,\ud x \ud y-\int_{\partial K_{\mathrm{I}}^{*} }(\mathbb{D}\nabla u_h)\cdot\vec{n}\,v_{\mathrm{I}}\,\ud s), 
       \\
    &=\sum_{K\in\mathcal{T}_h}\sum_{K_{\mathrm{I}}^{*}\in \mathcal{T}_{\mathrm{I}}^{*}}(\iint_{K_{\mathrm{I}}^{*}\cap K}(\mathbb{D}\nabla u_h)\cdot\nabla v_{\mathrm{I}}\,\ud x \ud y-\int_{\partial K_{\mathrm{I}}^{*} \cap K}(\mathbb{D}\nabla u_h)\cdot\vec{n}\,v_{\mathrm{I}}\,\ud s),   \\
a^{*}_{\mathrm{II}}(u_h,\,v_{\mathrm{II}})
    & =\sum_{K_{\mathrm{II}}^{*}\in \mathcal{T}_{\mathrm{II}}^{*}}(\iint_{K_{\mathrm{II}}^{*}}(\mathbb{D}\nabla u_h)\cdot\nabla v_{\mathrm{II}}\,\ud x \ud y-\int_{\partial K_{\mathrm{II}}^{*}}(\mathbb{D}\nabla u_h)\cdot\vec{n}\,v_{\mathrm{II}}\,\ud s).
\end{align*}
This can be written more compactly as
\begin{align}\label{eq:FVE-scheme}
a^{*}(u_h,\,v_h)=b^{*}(u_h,\,v_h),\quad \forall v_h =(v_{\mathrm{I}};\,v_{\mathrm{II}})\in V_h,
\end{align}
where
\begin{align*}
a^{*}(u_h,\,v_h) = a^{*}_{\mathrm{I}}(u_h,\,v_{\mathrm{I}})+a^{*}_{\mathrm{II}}(u_h,\,v_{\mathrm{II}}),\quad
b^{*}(f,\,v_h)=(f,\,v_{\mathrm{I}})+(f,\,v_{\mathrm{II}}).
\end{align*}

\section{Conservation laws}
\label{sec:conservation_law}

Conservation laws are fundamental physical properties and it is highly desired to preserve them in numerical discretizations.
Generally speaking, the finite volume (element) method is well known for its preservation of the local conservation law
in flux form (\ref{eq:conservation-law-flux}). However, this local conservation law on boundary dual elements is violated
in the existing FVE schemes when Dirichlet boundary conditions are used.
Moreover, to the authors' best knowledge, the local conservation law in equation form (\ref{eq:conservation-law-equation})
has not been addressed in literature so far. In this section we show that FVE-2L schemes
preserve the local and global conservation law in both flux and equation forms on the second dual layer with the help of the two-layer dual strategy.

Recall that any $v_{\mathrm{II}}\in V_{\mathrm{II}}$ is a piecewise polynomial on the second dual layer $\mathcal{T}_{\mathrm{II}}^{*}$. Taking $v_{\mathrm{II}}$ as the characteristic function of $K_{\mathrm{II}}^{*}\in\mathcal{T}_{\mathrm{II}}^{*}$
in the second equation of (\ref{eq:FVE-scheme0}), one has
\begin{align}
\label{eq:conservation-law-flux-for-FVE-2L}
-\int_{\partial K_{\mathrm{II}}^{*}}(\mathbb{D}\nabla u_h)\cdot\vec{n}\,\ud s=\iint_{K_{\mathrm{II}}^{*}}f\,\ud x \ud y,
\end{align}
which is the local conservation law in flux form (\ref{eq:conservation-law-flux}) on $K_{\mathrm{II}}^{*}$.
Since all dual elements of the second dual layer $\mathcal{T}_{\mathrm{II}}^{*}$ correspond to interior computing nodes,
Dirichlet boundary conditions have no effect on the conservation laws for dual elements of the second dual layer.
The global conservation law in flux form follows from the fact that $\mathcal{T}_{\mathrm{II}}^{*}$ forms a complete partition
of the domain $\Omega$ by itself.

By definition,  $K_{\mathrm{II}}^{*} \in \mathcal{T}_{II}^{*}$ is a triangular element of the primary mesh and
$u_h$ is continuously differentiable on $K_{\mathrm{II}}^{*}$. Applying the divergence theorem
to (\ref{eq:conservation-law-flux-for-FVE-2L}), one has
\begin{align}
\label{eq:conservation-law-equa-for-FVE-2L}
-\iint_{K_{\mathrm{II}}^{*}}\nabla\cdot(\mathbb{D}\nabla u_h)\,\ud x \ud y = \iint_{K_{\mathrm{II}}^{*}}f\,\ud x \ud y.
\end{align}
Thus, we have the local conservation law in equation form (\ref{eq:conservation-law-equation}) on $K_{\mathrm{II}}^{*}$ for FVE-2L schemes. The global conservation law in equation form follows from the summation over all $K_{\mathrm{II}}^{*}\in \mathcal{T}_{\mathrm{II}}^{*}$.

On the other hand, FVE-2L schemes behave more or less like existing FVE schemes on the first dual layer
(cf. numerical examples in Section~\ref{sec:numerical experiments}).
Generally speaking, they do not preserve the local conservation law in both flux and equation forms on dual elements
of the first dual layer. The exception is odd-order FVE-2L schemes that preserve the local conservation law
in flux form on the interior elements of the first dual layer (or all elements if no Dirichlet boundary conditions are used)
due to the independence of their test spaces.

FVE-2L schemes preserve the global conservation law in equation form but not in flux form for the first dual layer. 
The former is a consequence of the equality
\begin{align*}
\sum_{K_{I}^{*}\in\mathcal{T}_{I}^{*}}\iint_{K_{I}^{*}}\nabla\cdot(\mathbb{D}\nabla u_h)\,\ud x\ud y
=\sum_{K_{II}^{*}\in\mathcal{T}_{II}^{*}}\iint_{K_{II}^{*}}\nabla\cdot(\mathbb{D}\nabla u_h)\,\ud x\ud y.
\end{align*}

As will be seen from the error analysis in Section~\ref{sec:L2 estimate}, FVE-2L schemes are convergent. As a consequence,
the magnitude of the difference between two sides of (\ref{eq:conservation-law-equation}) or (\ref{eq:conservation-law-flux}),
an indicator of the severity of the violation of the conservation law, decreases as the mesh is being refined.

\section{Stability and boundedness}
\label{sec:Coercivity_and_boundedness}

\subsection{Stability for existing FVE schemes}

The FVE method is a Petrov-Galerkin method where the trial and test spaces are selected differently.
Its stability is ensured by the inf-sup condition
\begin{align}
\inf_{u_{h}\in U_{h}} \sup_{v_{h}\in V_{h}} \frac{a(u_{h},\,v_{h})}{|u_{h}|_{1}\,|v_{h}|_{1,\mathcal{T}_{h}^{*}}}\geq C,
\end{align}
where $U_{h}$, $V_{h}$, and $a(\cdot,\,\cdot)$ are the trial space, test space, and bilinear form of the underlying FVE scheme,
respectively, $|\cdot |_{1}$ denotes the $H^1$ semi-norm, and $C$ is a positive constant independent of $u_{h}$ and $h$.
In the context of FVE methods, the inf-sup condition is typically derived
by defining a trial-to-test mapping $\Pi^{*}$ and proving the so-called uniform local-ellipticity condition
\begin{align}
\label{eq:stability_FVE_one}
a_{K}(u_{h},\,\Pi^{*}u_{h})\geq C_{1}|u_{h}|_{1,K}^{2},\quad \forall K \in \mathcal{T}_h,
\end{align}
where $a_{K}(\cdot,\,\cdot)$ is the bilinear form on $K$;
e.g., see \cite{Chen.2012,Lin.2015,Lv.2012,Wang.2016,Zhang.2023,Zhang.2015,Zou.2017}. 
For triangular meshes, this condition has been derived (e.g., see \cite{Chen.2012,Xu.2009,Zhou.2020})
from the minimum angle condition requiring that all of the interior angles of the triangular elements
be greater than or equal to a certain positive lower bound.

A framework for computing the lower bound for the minimum angle condition was developed
in \cite{Chen.2012} for high-order FVE schemes on triangular meshes.
Define
\begin{align}
\textbf{H}(r_1,r_2)=I+ \tilde{\textbf{A}}_0+r_1\tilde{\textbf{A}}_1
+r_2\tilde{\textbf{A}}_2,
\label{H-0}
\end{align}
where $r_1 = \frac{|l_1|^2}{|l_0|^2}$ and $r_2 = \frac{|l_2|^2}{|l_0|^2}$,
$l_0$, $l_1$, and $l_2$ represent the three edges of an arbitrary triangular element,
$I$ is the identity matrix, $\tilde{\textbf{A}} = (\textbf{A} + \textbf{A}^{T})/2$, and
\begin{align*}
\textbf{A}_0& =\textbf{A}_{0,1}+\textbf{A}_{0,2}-\textbf{A}_{1,2},\\
\textbf{A}_1& =\textbf{A}_{0,1}-\textbf{A}_{0,2}+\textbf{A}_{1,2},\\
\textbf{A}_2& =-\textbf{A}_{0,1}+\textbf{A}_{0,2}+\textbf{A}_{1,2},
\\
\textbf{A}_{0,1} & =[\hat{a}_{0,1}(\hat{\phi_{j}},\hat{\psi_i})],
\quad \textbf{A}_{0,2} =[\hat{a}_{0,2}(\hat{\phi_{j}},\hat{\psi_i})],\quad  \textbf{A}_{1,2}=[\hat{a}_{1,2}(\hat{\phi_{j}},\hat{\psi_i})],
\\
\hat{a}_{0,1}(\hat{\phi},\hat{\psi})
    =& \sum_{Q\in\mathrm{supp}(\hat{\psi})\cap \hat{K}} 
        (\iint_{Q}  \frac{\partial \hat{\phi}}{\partial x}\frac{\partial\hat{\psi}}{\partial x}\,\ud x\ud y
        -\int_{\partial Q\cap\hat{K}^{\circ}}\hat{\psi}\frac{\partial\hat{\phi}}{\partial x}\,\ud y),\\
\hat{a}_{0,2}(\hat{\phi},\hat{\psi})
    =& \sum_{Q\in\mathrm{supp}(\hat{\psi})\cap \hat{K} } 
        (\iint_{Q}  \frac{\partial \hat{\phi}}{\partial y}\frac{\partial\hat{\psi}}{\partial y}\,\ud x\ud y+\int_{\partial Q \cap\hat{K}^{\circ}}\hat{\psi}\frac{\partial\hat{\phi}}{\partial y}\,\ud x),\\
\hat{a}_{1,2}(\hat{\phi},\hat{\psi})
    =& \sum_{Q\in\mathrm{supp}(\hat{\psi})\cap \hat{K}} 
        \iint_{Q}  (\frac{\partial \hat{\phi}}{\partial x}-\frac{\partial \hat{\phi}}{\partial y})(\frac{\partial\hat{\psi}}{\partial x}-\frac{\partial\hat{\psi}}{\partial y})\,\ud x\ud y\\
    &-\sum_{Q\in\mathrm{supp}(\hat{\psi})\cap \hat{K}}
        \int_{\partial Q\cap\hat{K}^{\circ}}\hat{\psi}(\frac{\partial\hat{\phi}}{\partial x}-\frac{\partial\hat{\phi}}{\partial y})\,(\ud y+\ud x).
\end{align*}
Here, $\hat{\phi}$ and $\hat{\psi}$ are trial and test basis functions on $\hat{K}$ and $\hat{K}^{\circ}$
denotes the interior of $\hat{K}$.
It was shown in \cite{Chen.2012} that a lower bound for a sufficient minimum angle condition is given by
\begin{align}
\mathcal{B} =\sup\limits_{(r_1,r_2)\in \Gamma_\textbf{H}}\theta_{\min}(r_1,r_2),
\label{optimization-B1-0}
\end{align}
where
\begin{align*}
& \theta_{\min}(r_1,r_2) = \arccos\left (\frac{1+r_1-r_2}{2(r_1)^{1/2}}\right ),
\\
& \Gamma_\textbf{H} =\{(r_1,r_2):\;\underline{r_1}<r_1\leq 1, \; r_2 = \underline{r_2}(r_1)\},
\\
& \underline{r_1} =1-\frac{1}{\lambda_{\max}(\textbf{H}(1,1),\tilde{\textbf{A}}_1+\tilde{\textbf{A}}_2)},
\\
& \underline{r_2}(r_1) =r_1-\frac{1}{\lambda_{\max}(\textbf{H}(r_1,r_1),\tilde{\textbf{A}}_2)},\qquad r_1\in(\underline{r_1},1],
\end{align*}
and $\lambda_{\max}(\textbf{B},\textbf{A})$ denotes the maximum generalized eigenvalue of $\textbf{A}$ with
respect to $\textbf{B}$, i.e. the largest root of the algebraic equation
$\det (\textbf{A}-\lambda \textbf{B})=0$. Notice that the above optimization is well defined only when
$\textbf{H}(r_1,r_1)$, for $r_1\in(\underline{r_1},1]$, and particularly, $\textbf{H}(1,1)$, is positive definite,
which was shown in \cite{Chen.2012} to be true for the existing FVE schemes.
Notice also that $r_1 = r_2 = 1$ implies that the underlying triangular element is equilateral.

An upper bound that is more economic to compute was suggested in \cite{Chen.2012}. For a given positive integer $N$,
define $\Gamma_{\textbf{H},N} =\bigcup\limits_{k=0}^{N-1}\overline{R_kR_{k+1}}$, where
\[
R_k =(r_1^{[k]},r_2^{[k]})\in \Gamma_\textbf{H}, \quad
r_1^{[k]} = \underline{r_1}+\frac{k(1-\underline{r_1})}{N}, \quad
r_2^{[k]} = \underline{r_2}(r_1^{[k]}), \quad k=0,1,\cdots,N .
\]
The upper bound is defined as
\[
\mathcal{B}_{N} = \sup_{(r_1,r_2)\in\Gamma_{\textbf{H},N}}\theta_{\min}(r_1,r_2).
\]
It was shown in \cite{Chen.2012} that the above optimization problem is equivalent mathematically to
\begin{equation}
\mathcal{B}_{N} =\max\limits_{0\leq k\leq N-1}\overline{\theta_{\min}}(r_1^{[k]},r_2^{[k]},r_1^{[k+1]},r_2^{[k+1]}),
\label{optimization-B1-1}
\end{equation}
where
\begin{align*}
\overline{\theta_{\min}}(s_1,s_2,t_1,t_2) =
\left\{
\begin{array}{ll}
\arccos c_{ST}(t_1),&g=0, \\
\arccos\min\{c_{ST}(s_1),c_{ST}(t_1)\}, &g\neq 0\,\, \mathrm{and}\,\, \frac{h}{g}\notin[s_1,t_1],\\
\arccos\min\{c_{ST}(s_1),c_{ST}(t_1),c_{ST}(h/g)\},&\,\mathrm{otherwise},
\end{array}
\right.
\end{align*}
and
\begin{align*}
g =1-\frac{t_2-s_2}{t_1-s_1},\quad  h =1+s_1-s_2-gs_1,\quad  c_{ST}(r) =\frac{gr+h}{2r^{1/2}}.
\end{align*}

\subsection{Stability for FVE-2L schemes}

We want to apply the framework of \cite{Chen.2012}
(described in the previous subsection) to the study of the stability for FVE-2L schemes.
This application is not trivial.
The main difficulty is that the matrix defined in (\ref{H-0}) is not positive definite for $r_1 = r_2 = 1$
and therefore the optimization problem (\ref{optimization-B1-0}) is not well defined.
To circumvent this difficulty, we introduce a family of trial-to-test mappings
with parameters and define a constrained minmax optimization problem over $r_1$ and $r_2$ and the parameters
involved in the trial-to-test mappings for the lower bound for the minimum angle condition.

Recall that, for the reference element $\hat{K}$,
$\hat{u}_h \in \hat{U}_h^k$ can be expressed as $\hat{u}_h = \sum_{i=1}^{N_K} \hat{u}_i \hat{\phi}_i$
and $\hat{v}_h \in \hat{V}_h$ can be expressed as $\hat{v}_h = \sum_{i=1}^{N_K} \hat{v}_i \hat{\psi}_i$.
Then, we define the parametric trial-to-test mapping $\hat{\Pi}_{\textbf{a},\textbf{b}}^{*}: \hat{U}_h \to \hat{V}_h$ as
\begin{align}
\label{eq:trial-to-test-mapping-def}
\left(
\begin{array}{c}
\hat{v}_1\\
\vdots\\
\hat{v}_{N_{K}}
\end{array}
\right)
=
M_k(\mathbf{a},\mathbf{b})
\left(
\begin{array}{c}
\hat{u}_1\\
\vdots\\
\hat{u}_{N_{K}}
\end{array}
\right),
\end{align}
where $\V{a}$ and $\V{b}$ are the parameters and
$M_k(\mathbf{a},\mathbf{b})$ is a matrix of size $N_{K} \times N_{K}$ and its expression is
given in Appendix~\ref{Appendix:trial-to-test-mapping}.
The trial-to-test mapping on any triangular element $K$ can be obtained from $\hat{\Pi}_{\textbf{a},\textbf{b}}^{*}$
through the affine mapping between $\hat{K}$ and $K$.
For this family of trial-to-test mappings we define
\begin{align}
\textbf{H}(r_1,r_2,\mathbf{a},\mathbf{b})=I+ M_k(\mathbf{a},\mathbf{b}) \left ( \tilde{\textbf{A}}_0+r_1\tilde{\textbf{A}}_1
+r_2\tilde{\textbf{A}}_2 \right ),
\label{H}
\end{align}
With this definition of $\textbf{H}$, the maximization (\ref{optimization-B1-1}) can be performed
if $\textbf{H}(1,1,\mathbf{a},\mathbf{b})$ is semi-definite. Moreover, the maximum value is a function of
the parameters $\mathbf{a}$ and $\mathbf{b}$, i.e., $\mathcal{B}_{N} = \mathcal{B}_{N}(\mathbf{a},\mathbf{b})$.
This function is minimized over the parameters under the constraint that
$\textbf{H}(1,1,\mathbf{a},\mathbf{b})$ is semi-definite, viz.,
\begin{align}
\label{minimum_problem}
\begin{array}{rl}
(\textbf{a}^{*},\textbf{b}^{*})=
    \arg \min &  \mathcal{B}_{N}(\textbf{a},\textbf{b}),\\
    \mathrm{s.t.} &       \textbf{H}(1,1,\mathbf{a},\mathbf{b})\geq 0,
\end{array}
\end{align}
where $\geq$ is in the sense of positive semi-definiteness.
This problem is highly nonlinear and needs to be solved numerically.
The computed lower bound $\mathcal{B}_N$ for the minimum angle and the corresponding optimal values for $\V{a}$ and $\V{b}$
for FVE-2L schemes are reported in Table~\ref{tab:minimum-angle}.

According to the framework of \cite{Chen.2012}, the solution to the minmax problem (\ref{minimum_problem}) provides
a lower bound for the minimum angle condition.

\begin{table}[htbp!]
\centering
  \caption{The lower bound of the minimum angle ($\mathcal{B}_N$) and optimal parameters $\V{a}^*$ and $\V{b}^*$
  for $k$th-order FVE-2L schemes.}
  \label{tab:minimum-angle}
  %\begin{threeparttable}
  \begin{tabular}{ c | c | c}
  \toprule
  scheme &  interpolation parameters $(\textbf{a}^{*},\textbf{b}^{*})$ &$\mathcal{B}_N$ \\
  \hline
  \multirow{2}*{quadratic}  &$\textbf{a}^{*}\approx(-0.1667, 1.3333)$&\multirow{2}*{$1.04^{\circ}$}\\
                            &$\textbf{b}^{*}\approx(-0.1078, -0.1347, 0.7273)$&\\
  \hline
  \multirow{2}*{cubic}      &$\textbf{a}^{*}\approx(0.0086, 1.3453, -0.4170, 0.0632)$&\multirow{2}*{$11.19^{\circ}$}\\
                            &$\textbf{b}^{*}\approx(0.0420, -0.1273, 0.6377)$&\\
  \hline
  \multirow{4}*{quartic}    &$\textbf{a}^{*}\approx(0.0829, 0.6149, 0.0970, 0.1238, 0.0815,$&\multirow{4}*{$28.85^{\circ}$}\\
                            &$0.0730, 0.0714, 0.7113)$        &\\
                            &$\textbf{b}^{*}\approx(-0.0276, 0.0169, -0.1193, 0.0087, -0.0268,$ &\\
                            &$-0.0008, -0.0712, 0.0428, 0.1493)$&\\
  \bottomrule
  \end{tabular}
\end{table}

\begin{theorem}[Coercivity] 
\label{thm:Coercivity}
If the triangular mesh $\mathcal{T}_h$ satisfies the minimum angle condition
\begin{align}
\label{eq:minmum_angleCondition}
\theta_K \geq \mathcal{B}_N, \quad \forall K \in \mathcal{T}_h,
\end{align}
there exists a positive constant $\alpha$ such that
\begin{align}
\label{eq:Coercivity equation}
a^{*}(u_h,\Pi_h^{*}u_h)\geq \alpha|u_h|_1^2, \quad \forall u_h\in U_h,
\end{align}
where $\theta_K$ denotes the minimal interior angle of $K$, the value of $\mathcal{B}_N$ is given in Table~\ref{tab:minimum-angle},
and $\Pi_h^{*}$ is the trial-to-test mapping with the optimal parameters $\textbf{a}^{*}$ and $\textbf{b}^{*}$.
\end{theorem}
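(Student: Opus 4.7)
The plan is to piggyback on the framework of \cite{Chen.2012}, adapted to the two-layer dual setting and to the parametric family of trial-to-test mappings $\hat{\Pi}^*_{\mathbf{a},\mathbf{b}}$. Because $a^*(u_h, \Pi_h^* u_h) = \sum_{K \in \mathcal{T}_h} a_K(u_h, \Pi_h^* u_h)$ and $|u_h|_1^2 = \sum_K |u_h|_{1,K}^2$, the first step is to reduce (\ref{eq:Coercivity equation}) to a local uniform coercivity $a_K(u_h, \Pi_h^* u_h) \geq \alpha\, |u_h|_{1,K}^2$ on each $K$, with $\alpha$ independent of $K$ and $h$.

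Next I would pull each $K$ back to $\hat{K}$ through the standard affine map. The coefficient vector of $\Pi_h^* u_h$ is transported by $M_k(\mathbf{a},\mathbf{b})$, while the gradients of the trial and test basis functions pick up Jacobian factors that produce the edge-length ratios $r_1 = |l_1|^2/|l_0|^2$ and $r_2 = |l_2|^2/|l_0|^2$. Careful assembly of the interior-edge contributions $\partial Q \cap \hat{K}^\circ$ over both $Q_1, Q_2, Q_3$ (first dual layer) and $Q_4$ (second dual layer) should express $a_K(u_h, \Pi_h^* u_h)$, up to a positive geometric factor, as the quadratic form $\hat{\mathbf{u}}^T\, \mathbf{H}(r_1, r_2, \mathbf{a}, \mathbf{b})\, \hat{\mathbf{u}}$ with $\mathbf{H}$ as in (\ref{H}), while $|u_h|_{1,K}^2$ is bounded above by the same geometric factor times $|\hat{\mathbf{u}}|^2$. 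Local coercivity then reduces to uniform positive definiteness of the symmetric part of $\mathbf{H}(r_1, r_2, \mathbf{a}^*, \mathbf{b}^*)$ over all $(r_1, r_2)$ arising from triangles with $\theta_K \geq \mathcal{B}_N$.

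I would then invoke the construction of $\mathcal{B}_N$ in (\ref{optimization-B1-0})--(\ref{optimization-B1-1}) directly from \cite{Chen.2012}: by design, these formulae trace the boundary of the $(r_1, r_2)$ region where $\mathbf{H}$ stays positive definite, and convert it back to a minimum-angle threshold via $\theta_{\min}(r_1, r_2)$. The minmax problem (\ref{minimum_problem}) chooses $(\mathbf{a}^*, \mathbf{b}^*)$ to simultaneously enforce the boundary condition $\mathbf{H}(1,1,\mathbf{a}^*, \mathbf{b}^*) \geq 0$ (the equilateral case) and minimize the resulting $\mathcal{B}_N(\mathbf{a}, \mathbf{b})$. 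Any $K$ with $\theta_K \geq \mathcal{B}_N$ is then mapped to an $(r_1, r_2)$ strictly inside the admissible region, providing a uniform positive lower bound on the smallest generalized eigenvalue of $\mathbf{H}$ and hence the desired $\alpha > 0$.

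The main obstacle will be the equilateral limit $r_1 = r_2 = 1$. Unlike in \cite{Chen.2012}, where $\mathbf{H}(1,1)$ is strictly positive definite, here the FVE-2L structure only permits semi-definiteness at this boundary point, so care is needed to verify that the constraint $\theta_K \geq \mathcal{B}_N$ puts $K$ strictly inside the admissible $(r_1, r_2)$ region and away from the semi-definite degeneracy, thereby guaranteeing a uniform (not merely pointwise) coercivity constant. A secondary technical difficulty is the explicit formation of $\tilde{\mathbf{A}}_0, \tilde{\mathbf{A}}_1, \tilde{\mathbf{A}}_2$ for the two-layer test space: the interior edges entering $\hat{a}_{0,1}, \hat{a}_{0,2}, \hat{a}_{1,2}$ now comprise boundaries of both $Q_1, Q_2, Q_3$ and $Q_4$ inside $\hat{K}^\circ$, and one must take care not to double-count segments shared between the two dual layers.
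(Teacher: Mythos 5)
Your proposal follows essentially the same route as the paper: the paper's proof is a direct appeal to the framework of \cite{Chen.2012} (Theorem~4.14 there), combined with the parametric trial-to-test mapping (\ref{eq:trial-to-test-mapping-def}) and the minmax construction (\ref{minimum_problem}) of $\mathcal{B}_N$, which is exactly the reduction-to-$\mathbf{H}(r_1,r_2,\mathbf{a},\mathbf{b})$ argument you outline. Your additional attention to the semi-definiteness of $\mathbf{H}(1,1,\mathbf{a}^*,\mathbf{b}^*)$ at the equilateral limit correctly identifies the very point the paper flags as the nontrivial part of adapting that framework.
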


\begin{proof}
Essentially, (\ref{eq:Coercivity equation}) is ensured by the minimum angle condition and
the definition of $\Pi_h^{*}$ and $\mathcal{B}_N$.
The proof of this result is similar to that of Theorem~4.14 in \cite{Chen.2012}. 
\end{proof}

\subsection{Boundedness}\label{subsec:Boundedness}

\begin{lemma}
\label{lem:boundedness lemma 1}
If the triangular mesh $\mathcal{T}_{h}$ satisfies the minimum angle condition (\ref{eq:minmum_angleCondition}), there holds
\begin{align}   \label{eq:u_diff_estimate}
|u_h(P_i)-u_h(P_j)|\leq C|u_h|_{1,K},\quad i,j=1,\cdots,N_{K}, \quad \forall K \in \mathcal{T}_h, \quad \forall u_h \in U_h .
\end{align}
Moreover, with the relation
(\ref{eq:restictions_pih}) for the parametric transformation matrix $M_k(\mathbf{a},\mathbf{b})$, one has
\begin{align}
& |[\![\Pi_{\mathrm{I}}^{*}u_h]\!]_{l^{*}}|    \leq  C|u_h|_{1,K},  \quad \forall u_{h}\in U_{h}, 
\label{eq:wh_difference1}
\\
& |\Pi_{\mathrm{II}}^{*}u_h|_K |   \leq  C|u_h|_{1,K}, \quad \forall u_{h}\in U_{h},
\label{eq:wh_difference2}
\end{align}
where $\Pi_{\mathrm{I}}^{*}$ and $\Pi_{\mathrm{II}}^{*}$ denote the part of $\Pi_h^{*}$ on the first and second layers of the dual mesh
and $[\![\Pi_{\mathrm{I}}^{*}u_h]\!]_{l^{*}}$ denotes the jump of $\Pi_{\mathrm{I}}^{*}u_h$ over the dual boundary $l^{*}$
corresponding to the first dual layer restricted within $K$.
\end{lemma}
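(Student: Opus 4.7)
The plan is to reduce everything to the reference element $\hat K$ via an affine scaling argument and then exploit finite-dimensionality together with the constant-preservation built into $M_k(\mathbf{a},\mathbf{b})$.

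First I would prove the nodal-difference estimate (\ref{eq:u_diff_estimate}). Let $F_K:\hat K\to K$, $\hat x\mapsto B_K\hat x+c_K$ be the affine map and set $\hat u_h=u_h\circ F_K\in\hat U_h$. Nodal values are preserved, $u_h(P_i^K)=\hat u_h(\hat P_i)$, and a standard change-of-variables computation (using $\nabla u_h=B_K^{-T}\hat\nabla\hat u_h$ and $dx=|\det B_K|d\hat x$) gives
\begin{align*}
|\hat u_h|_{1,\hat K}\leq \|B_K\|\,|\det B_K|^{-1/2}\,|u_h|_{1,K}.
\end{align*}
Under the minimum angle condition (\ref{eq:minmum_angleCondition}), $\|B_K\|\,|\det B_K|^{-1/2}$ is bounded by a constant depending only on $\mathcal{B}_N$ (the shape-regularity it encodes), so $|\hat u_h|_{1,\hat K}\leq C|u_h|_{1,K}$. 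On $\hat K$, the linear functional $\hat u_h\mapsto \hat u_h(\hat P_i)-\hat u_h(\hat P_j)$ vanishes on constants; since $\hat U_h$ is finite dimensional and $|\cdot|_{1,\hat K}$ is a norm on the quotient $\hat U_h/\mathbb{R}$, we obtain $|\hat u_h(\hat P_i)-\hat u_h(\hat P_j)|\leq C|\hat u_h|_{1,\hat K}$. Composing the two estimates yields (\ref{eq:u_diff_estimate}).

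For (\ref{eq:wh_difference1}) and (\ref{eq:wh_difference2}) I would use the defining relation (\ref{eq:trial-to-test-mapping-def}) to express the test-function coefficients $\hat v_j$ of $\hat\Pi^{*}_{\mathbf{a},\mathbf{b}}\hat u_h$ as explicit linear combinations of the trial-nodal values $\hat u_i=\hat u_h(\hat P_i)$ with coefficients determined by $M_k(\mathbf{a},\mathbf{b})$. The jump $[\![\Pi_{\mathrm{I}}^{*}u_h]\!]_{l^*}$ on a dual interface inside $K$ and any nodal/seminorm quantity of $\Pi_{\mathrm{II}}^{*}u_h|_K$ are, by pullback, linear functionals of the vector $(\hat u_1,\dots,\hat u_{N_K})$. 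The relation (\ref{eq:restictions_pih}) on $M_k(\mathbf{a},\mathbf{b})$ is precisely the consistency property asserting that, whenever $\hat u_h$ is the constant function, the induced test function $\hat\Pi^{*}_{\mathbf{a},\mathbf{b}}\hat u_h$ has zero jumps across the internal dual-interfaces of the first layer and the zero restriction (modulo constants) on $\hat K$ for the second layer; hence both functionals annihilate constants. Applying the equivalence of norms on $\hat U_h/\mathbb{R}$ gives reference-element bounds by $C|\hat u_h|_{1,\hat K}$, and the same affine scaling argument transfers these to bounds by $C|u_h|_{1,K}$ on $K$.

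The main obstacle will be extracting the correct constant-preservation consequences from the hypothesis (\ref{eq:restictions_pih}): one must verify that the rows of $M_k(\mathbf{a},\mathbf{b})$ corresponding to nodes on opposite sides of each internal dual-interface $l^*$ give matching values when applied to the all-ones vector (so that the first-layer jump vanishes on constants), and that the rows corresponding to second-layer nodes produce test values making $\Pi_{\mathrm{II}}^{*}u_h|_K$ vanish (in the relevant sense) for constant $u_h$. Once this constant-annihilation is confirmed, the remainder of the argument is the standard scaling-plus-finite-dimensionality machinery carried out on each of the four dual regions $Q_1,\dots,Q_4$ separately and combined over $K$.
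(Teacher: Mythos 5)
Your proposal is correct and follows essentially the same route as the paper: prove (\ref{eq:u_diff_estimate}) by affine pullback to $\hat K$ plus norm equivalence on the finite-dimensional quotient by constants, then observe that (\ref{eq:restictions_pih}) forces the rows of $M_k(\mathbf{a},\mathbf{b})$ to sum to $1$ (first layer) or $0$ (second layer), so the jump $[\![\Pi_{\mathrm{I}}^{*}u_h]\!]_{l^{*}}$ and $\Pi_{\mathrm{II}}^{*}u_h|_K$ are linear combinations of nodal differences $u_{K,i}-u_{K,j}$ and hence annihilate constants. The "obstacle" you flag (verifying constant-annihilation from the row-sum conditions) is exactly the step the paper carries out, so no gap remains.
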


\begin{proof}
Since $\mathcal{T}_{h}$ is a regular triangular mesh satisfying the minimum angle condition (\ref{eq:minmum_angleCondition}),
one has the following equivalence between the semi-$H^1$ norms on $K$ and $\hat{K}$ for $u_h\in U_h$.
\begin{align*}
C_{1} |u_{h}|_{1,K} \leq |\hat{u}_{h}|_{1,\hat{K}}\leq C_{2} |u_{h}|_{1,K}.
\end{align*}
Denote by $P_{K,i}$ ($i = 1, ..., N_K$) the image of $P_{i}$ ($i = 1, ..., N_K$) under the affine mapping from $\hat{K}$ to $K$.
Then, for any $i,j=1,\cdots,N_{K}$, 
\begin{align*}
|u_h(P_{K,i})-u_h(P_{K,j})| = |\hat{u}_h(P_i)-\hat{u}_h(P_j)|\leq C|\hat{u}_{h}|_{1,\hat{K}}\leq C |u_{h}|_{1,K},
\end{align*}
which gives the estimate (\ref{eq:u_diff_estimate}).
 
 Recall that for any $u_h \in U_h$, we have $u_h |_K = \sum_{i=1}^{N_K} u_{K,i} \hat{\phi}_i$, where $u_{K,i} = u_h(P_{K,i})$.
 Let $v_h = (v_{h,\mathrm{I}},v_{h,\mathrm{II}}) = \Pi^{*} u_h$.
From (\ref{eq:VKI}) and (\ref{eq:VKII}), we have
\[
v_{h,\mathrm{I}}|_{K}=\sum_{P_{K,j}\in\mathcal{N}_{1}\cup\mathcal{N}_{2}\cup\mathcal{N}_{3}} v_{K,j}\hat{\psi}_j,    
\quad
v_{h,\mathrm{II}}|_{K}=\sum_{P_{K,j}\in\mathcal{N}_{4}} v_{K,j}\hat{\psi}_j, 
\]
where $v_{K,j}:=v_h(P_{K,j})$ ($j=1,\dots,N_{K}$) are given by (\ref{eq:trial-to-test-mapping-def}). 
Consider the jump of $v_{h,\mathrm{I}}$ on $l^{*}=Q_{K,i_{1}}\cap Q_{K,i_{2}}$ ($i_1,i_2\in\{1,2,3\},$ $i_1<i_2$).
Notice that  $[\![\Pi_{\mathrm{I}}^{*}u_h]\!]_{l^{*}} = [\![ v_{h,\mathrm{I}} ]\!]_{l^{*}}$ can be represented
as a combination of $v_h(P_{K,i})-v_h(P_{K,j})$ $(P_{K,i}\in\mathcal{N}_{i_1},\,P_{K,j}\in\mathcal{N}_{i_2})$.
Recalling from (\ref{eq:restictions_pih}) that the row sums (combination coefficients) of $M_{k}(\V{a},\V{b})$ corresponding
to $\mathcal{T}_{h,\mathrm{I}}^{*}$ are 1, the jump $[\![\Pi_{\mathrm{I}}^{*}u_h]\!]_{l^{*}}$ can finally be expressed
as a combination of $u_{K,i}-u_{K,j}$ ($i,j=1,\dots,N_{K}$). Thus, (\ref{eq:wh_difference1}) can be derived
from (\ref{eq:u_diff_estimate}).

Moreover, (\ref{eq:restictions_pih}) indicates the row sums of $M_{k}(\V{a},\V{b})$ corresponding to the second dual layer are 0, which means $\Pi_{\mathrm{II}}^{*}u_h$ on $K$ can be expressed as a combination of $u_{K,i}-u_{K,j}$ ($i,j=1,\dots,N_{K}$). Then, we have (\ref{eq:wh_difference2}).
\end{proof}

\begin{theorem}[Boundedness]
\label{thm:boundedness}
For the trial-to-test mapping $\Pi_h^{*}$ given by (\ref{eq:trial-to-test-mapping-def}), there holds
\begin{align*}
a^{*}(u_h,\Pi_h^{*}v_h)\leq C|u_h|_1 |v_h|_1, \quad \forall u_h, \, v_h \in U_h,
\end{align*}
for FVE-2L schemes.
\end{theorem}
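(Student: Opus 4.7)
The plan is to split the bilinear form as $a^*(u_h, \Pi_h^* v_h) = a^*_{\mathrm{I}}(u_h, \Pi_{\mathrm{I}}^* v_h) + a^*_{\mathrm{II}}(u_h, \Pi_{\mathrm{II}}^* v_h)$ and treat the two dual layers separately, using Lemma~\ref{lem:boundedness lemma 1} as the main tool. On each primary element $K$, after collecting the dual-element contributions in $a^*_{\mathrm{I}}$, I would rewrite the integral-by-parts expression so that the volume portion becomes $\iint_K (\mathbb{D}\nabla u_h)\cdot\nabla (\Pi_{\mathrm{I}}^* v_h)\,\ud x\ud y$ while the boundary portion collapses into segments $l^{*}\subset K$ of the first-layer dual skeleton, weighted by the jumps $[\![\Pi_{\mathrm{I}}^* v_h]\!]_{l^{*}}$. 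The second-layer integral is cleaner because each $K^*_{\mathrm{II}}=K$, giving a volume term $\iint_K(\mathbb{D}\nabla u_h)\cdot\nabla(\Pi_{\mathrm{II}}^* v_h)\,\ud x\ud y$ together with a trace over $\partial K$ of $\mathbb{D}\nabla u_h\cdot\vec n\,\Pi_{\mathrm{II}}^* v_h$.

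Next, I would scale everything back to $\hat K$ via the affine map (justified by the minimum-angle condition (\ref{eq:minmum_angleCondition})), so that the finite-dimensional estimates on $\hat K$ become equivalences with explicit $h_K$ powers. The key observation is that $\Pi_{\mathrm{I}}^* v_h$ and $\Pi_{\mathrm{II}}^* v_h$ are polynomials of fixed degree on each of the subregions $Q_i$, and, by the row-sum constraints (\ref{eq:restictions_pih}), their nodal values can be written as linear combinations of the differences $v_h(P_{K,i}) - v_h(P_{K,j})$. Applying (\ref{eq:u_diff_estimate}) together with a standard scaling yields
\begin{equation*}
\|\nabla \Pi_{\mathrm{I}}^* v_h\|_{L^2(K)} + \|\nabla \Pi_{\mathrm{II}}^* v_h\|_{L^2(K)} \leq C\,|v_h|_{1,K},
\end{equation*}
while (\ref{eq:wh_difference1}) and (\ref{eq:wh_difference2}) already supply pointwise control on $[\![\Pi_{\mathrm{I}}^* v_h]\!]_{l^{*}}$ and on $\Pi_{\mathrm{II}}^* v_h|_K$ by $C|v_h|_{1,K}$. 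Cauchy--Schwarz then handles the two volume integrals directly, yielding terms of size $C|u_h|_{1,K}|v_h|_{1,K}$.

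For the boundary contributions, since $u_h|_K\in P^k(K)$ on a shape-regular triangle, a standard trace/inverse estimate gives $\|\mathbb{D}\nabla u_h\|_{L^2(l^{*})} + \|\mathbb{D}\nabla u_h\|_{L^2(\partial K)} \leq C\,h_K^{-1/2}|u_h|_{1,K}$, while the jump/value bounds combined with the fact that $|l^{*}|,|\partial K| \leq C h_K$ yield $\|[\![\Pi_{\mathrm{I}}^* v_h]\!]_{l^{*}}\|_{L^2(l^{*})} + \|\Pi_{\mathrm{II}}^* v_h\|_{L^2(\partial K)} \leq C h_K^{1/2}|v_h|_{1,K}$. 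The $h_K$ powers match exactly, so each boundary integral also contributes $C|u_h|_{1,K}|v_h|_{1,K}$. Summing over $K\in\mathcal{T}_h$ and applying the discrete Cauchy--Schwarz inequality then delivers the desired bound $a^*(u_h,\Pi_h^* v_h) \leq C|u_h|_1\,|v_h|_1$.

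The main obstacle will be the boundary contribution from the first dual layer: I must verify that each trace of $\mathbb{D}\nabla u_h$ over $l^{*}$ can be absorbed by the jump $[\![\Pi_{\mathrm{I}}^* v_h]\!]_{l^{*}}$ with matched $h_K$ scaling, and that the combinatorial bookkeeping of which $l^{*}$ belongs to which $K$ does not introduce a factor depending on the number of dual regions (which, thanks to the two-layer strategy, is fixed at four). The positive-semi-definiteness constraint in (\ref{minimum_problem}) is not needed here; only the affine-pullback equivalence under the minimum-angle condition and the algebraic identities (\ref{eq:restictions_pih}) are required to make the estimates go through.
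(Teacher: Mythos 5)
Your proposal follows essentially the same route as the paper's proof: the same decomposition into volume and boundary contributions on the two dual layers (the paper's $E_{11},E_{12},E_{21},E_{22}$), the same reliance on Lemma~\ref{lem:boundedness lemma 1} to control the jumps $[\![\Pi_{\mathrm{I}}^{*}v_h]\!]_{l^{*}}$ and $\Pi_{\mathrm{II}}^{*}v_h$ by $C|v_h|_{1,K}$, and the same matching of $h^{1/2}$ and $h^{-1/2}$ factors via trace/inverse estimates before summing over elements. Your writeup is, if anything, more explicit than the paper's about the scaling argument for the volume terms; no gaps.
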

\begin{proof}
The bilinear form (\ref{eq:FVE-scheme}) of FVE-2L schemes can be rewritten as
\begin{align}
a^{*}(u_h,\Pi_h^{*}v_h)= a_{\mathrm{I}}^{*}(u_h,\Pi_{\mathrm{I}}^{*}v_h)+a_{\mathrm{II}}^{*}(u_h,\Pi_{\mathrm{II}}^{*}v_h) 
    = E_{11}+E_{12}+E_{21}+E_{22},\label{eq:boundedness proof 1}
\end{align}
where 
\begin{align*}
E_{11}=&\sum_{K\in\mathcal{T}_h}\sum_{K_{\mathrm{I}}^{*}\in\mathcal{T}_{\mathrm{I}}^{*}}
            \iint_{K_{\mathrm{I}}^{*}\cap K}(\mathbb{D}\nabla u_h)\cdot\nabla(\Pi_{\mathrm{I}}^{*}v_h)\,\ud x\ud y,\\
E_{12}=&-\sum_{K\in\mathcal{T}_h}\sum_{K_{\mathrm{I}}^{*}\in\mathcal{T}_{\mathrm{I}}^{*}}
            \int_{\partial K_{\mathrm{I}}^{*}\cap K}(\mathbb{D}\nabla u_h)\cdot\vec{n}\,    (\Pi_{\mathrm{I}}^{*}v_h)\,\ud s,\\
E_{21}=&\sum_{K\in\mathcal{T}_h}\iint_{K}(\mathbb{D}\nabla u_h)\cdot\nabla(\Pi_{\mathrm{II}}^{*}v_h)\,\ud x\ud y,\\
E_{22}=&-\sum_{K\in\mathcal{T}_h}\int_{\partial K}(\mathbb{D}\nabla u_h)\cdot\vec{n}\,  (\Pi_{\mathrm{II}}^{*}v_h)\,\ud s.
\end{align*}
In the above equation, we have used $K_{\mathrm{II}}^{*}=K$.
% Here, $E_{21}=0$ for cases of $\Pi_{\mathrm{II}}^{*}$ being a piecewise constant operator on $K_{\mathrm{II}}^{*}$.
It is not difficult to see that $E_{11}$ and $E_{21}$ can be bounded by
\begin{align*}
| E_{11}| \leq C |u_h|_1 |v_h|_1,      
% \label{eq:boundedness proof 2}
\qquad 
| E_{21}| \leq C |u_h|_1 |v_h|_1. 
%\label{eq:boundedness proof 3}
\end{align*}
Moreover, using Lemma~\ref{lem:boundedness lemma 1} and the trace theorem, one gets
\begin{align}
| E_{12}|  & \leq C \sum_{K\in\mathcal{T}_{h}}\sum_{l^{*}} (\int_{l^{*}}(\mathbb{D}\nabla u_h\cdot\vec{n})^2\,\ud s)^{1/2}\,
                (\int_{l^{*}}  | [\![ \Pi_{\mathrm{I}}^{*}v_h ]\!]_{l^{*}} |^{2}  \ud s)^{1/2}\nonumber\\
        & \leq Ch^{\frac{1}{2}}\sum_{K\in\mathcal{T}_{h}}\sum_{l^{*}}    |u_h|_{1,l^{*}}\,|v_h|_{1,K}
        \leq C|u_h|_1 |v_h|_1 ,    
        \notag    
        % \label{eq:boundedness proof 4} 
        \\
| E_{22}|  &  \leq C \sum_{K\in\mathcal{T}_{h}} (\int_{\partial K}(\mathbb{D}\nabla u_h\cdot\vec{n})^2\,\ud s)^{\frac{1}{2}}\,
                (\int_{\partial K}  | \Pi_{\mathrm{II}}^{*}v_h |^{2}  \ud s)^{1/2}     \nonumber\\
        & \leq Ch^{\frac{1}{2}}\sum_{K\in\mathcal{T}_{h}}|u_h|_{1,\partial K}\,|v_h|_{1,K} \le C|u_h|_1 |v_h|_1  . 
        \notag
        % \label{eq:boundedness proof 5}
\end{align}

The conclusion of Theorem~\ref{thm:boundedness} follows form the above results.
\end{proof}

\section{Error estimates}
\label{sec:L2 estimate}

Now we are ready to establish the $H^1$ and $L^2$ error estimates for $k$th-order ($k=2,\, 3, \, 4$) FVE-2L schemes.
It is worth emphasizing that the regularity requirement for the $L^2$ estimate of FVE-2L schemes is $u\in H^{k+1}$,
which is weaker than $u\in H^{k+2}$ required by the existing high-order FVE schemes
\cite{Li.2000,Lin.2015,Lv.2012,Wang.2016}.

\subsection{$H^1$ estimate}\label{sub:stability and H1 estimate}
% The $H^{1}$ estimate of FVE-2L schemes can be derived by Theorems~\ref{thm:Coercivity} and \ref{thm:boundedness}.

\begin{theorem}[$H^1$ estimate]
\label{thm:H1 estimate}
Given a $k$th-order FVE-2L scheme ($k=2,\, 3, \, 4$) on a regular triangular mesh $\mathcal{T}_{h}$
of $\Omega$, let $u\in H_0^1(\Omega)\cap H^{k+1}$ and $u_h\in U_h$ be the solutions of (\ref{eq:second_order_elliptic_problem})
and (\ref{eq:FVE-scheme}), respectively. If $\mathcal{T}_h$ satisfies the minimum angle condition (\ref{eq:minmum_angleCondition}),
there holds
\begin{align}\label{eq:H1 estimate equation}
\|u-u_h\|_1\leq Ch^{k}\|u\|_{k+1},
\end{align}
where $C$ is a constant independent of $h$ and $u$.
\end{theorem}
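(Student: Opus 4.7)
The plan is to use a standard Céa-type argument adapted to the Petrov–Galerkin setting of FVE schemes, leveraging the coercivity and boundedness results (Theorems~\ref{thm:Coercivity} and~\ref{thm:boundedness}) together with Galerkin orthogonality and classical Lagrange interpolation estimates. Let $u_I\in U_h$ denote the standard nodal interpolant of $u$ in the trial space $U_h$ (that is, $U_h^{2+b}$ for $k=2$ or $U_h^k$ for $k=3,4$). By the triangle inequality, $\|u-u_h\|_1\le \|u-u_I\|_1+\|u_I-u_h\|_1$, and since $u\in H^{k+1}(\Omega)$ and $U_h$ contains $P^k$ on each $K$, standard interpolation theory yields $\|u-u_I\|_1\le C h^k \|u\|_{k+1}$. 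It therefore suffices to bound $|u_I-u_h|_1$, after which Poincar\'e's inequality (applicable because $u-u_h\in H_0^1(\Omega)$ by the boundary conditions built into $U_h$) upgrades the result from seminorm to full norm.

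To estimate $|u_I-u_h|_1$, the next step is to apply Theorem~\ref{thm:Coercivity} with the trial-to-test mapping $\Pi_h^{*}$ associated with the optimal parameters $(\mathbf{a}^{*},\mathbf{b}^{*})$:
\begin{align*}
\alpha\,|u_I-u_h|_1^{2} \le a^{*}\bigl(u_I-u_h,\,\Pi_h^{*}(u_I-u_h)\bigr).
\end{align*}
I would then establish Galerkin orthogonality $a^{*}(u-u_h,v_h)=0$ for all $v_h\in V_h$ as follows: since $-\nabla\cdot(\mathbb{D}\nabla u)=f$ pointwise (a.e.) in $\Omega$, multiplying by any $v_h\in V_h$, integrating over each dual element $K^{*}\in \mathcal{T}_h^{*}$, and applying the divergence theorem reconstructs exactly $a^{*}(u,v_h)=(f,v_h)$, which combined with the scheme (\ref{eq:FVE-scheme}) gives the orthogonality. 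This allows the splitting
\begin{align*}
\alpha\,|u_I-u_h|_1^{2} \le a^{*}\bigl(u_I-u,\,\Pi_h^{*}(u_I-u_h)\bigr).
\end{align*}

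The main obstacle is bounding the consistency term $a^{*}(u_I-u,\Pi_h^{*}w_h)$ with $w_h=u_I-u_h\in U_h$, because the boundedness Theorem~\ref{thm:boundedness} is stated for both arguments in $U_h$, whereas $u_I-u$ is only in $H^{k+1}$. I would overcome this by mimicking the proof of Theorem~\ref{thm:boundedness}, decomposing $a^{*}(u_I-u,\Pi_h^{*}w_h)$ into the four pieces $E_{11},E_{12},E_{21},E_{22}$ on each primary element $K$. The two volume pieces are controlled directly by Cauchy–Schwarz together with the interpolation estimate $|u-u_I|_{1,K}\le C h^k\|u\|_{k+1,K}$ and the stability of $\Pi_h^{*}$ in the $H^1$-seminorm. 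The two boundary pieces use the jump/magnitude bounds of Lemma~\ref{lem:boundedness lemma 1} for $\Pi_{\mathrm{I}}^{*}w_h$ and $\Pi_{\mathrm{II}}^{*}w_h$, together with a trace inequality on $\partial K^{*}\cap K$ and on $\partial K$ applied to $\mathbb{D}\nabla(u_I-u)$; the scaling $h^{1/2}$ from the trace inequality balances the interpolation estimate $\|\mathbb{D}\nabla(u_I-u)\|_{0,\partial K}\le C h^{k-1/2}\|u\|_{k+1,K}$. Summing over $K\in\mathcal{T}_h$ and applying discrete Cauchy–Schwarz yields
\begin{align*}
\bigl|a^{*}(u_I-u,\Pi_h^{*}w_h)\bigr| \le C h^{k}\|u\|_{k+1}\,|w_h|_1,
\end{align*}
which, combined with the previous inequality, gives $|u_I-u_h|_1\le C h^k\|u\|_{k+1}$ and completes the proof via the triangle inequality and Poincar\'e.
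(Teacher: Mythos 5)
Your proposal follows essentially the same route as the paper's proof: coercivity of $a^{*}(\cdot,\Pi_h^{*}\cdot)$ from Theorem~\ref{thm:Coercivity}, Galerkin orthogonality to replace $u_h$ by $u$ in the first argument, a boundedness estimate for the resulting consistency term, and the standard interpolation error bound, finished by the triangle inequality. You are in fact more careful than the paper at the one delicate point: the paper applies Theorem~\ref{thm:boundedness} directly to $a^{*}(u-\Pi_h^k u,\,\Pi_h^{*}(u_h-\Pi_h^k u))$ even though $u-\Pi_h^k u\notin U_h$, whereas you explicitly flag this and sketch the element-by-element decomposition with trace inequalities needed to justify that step.
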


\begin{proof}
The orthogonality for the bilinear form $a^{*}(\cdot,\cdot)$ of FVE-2L schemes reads as
\begin{align}\label{eq:orthogonality equation}
a^{*}(u-u_h,v_h)=0,\quad \forall v_h\in V_h.
\end{align}
Taking $v_h= \Pi_h^{*}(u_h-\Pi_h^{k}u)$ in the above equation, we get
\begin{align*}
a^{*}(u-u_h,\Pi_h^{*}(u_h-\Pi_h^{k}u))=0 .
\end{align*}
Using this and Theorems~\ref{thm:Coercivity} and \ref{thm:boundedness}, we have
\begin{align}\label{eq:H1 eatimate proof 1}
|u_h-\Pi_h^{k}u|_1&\leq \frac{1}{\alpha}\frac{a^{*}(u_h-\Pi_h^ku,\Pi_h^{*}(u_h-\Pi_h^{k}u))}{|u_h-\Pi_h^{k}u|_1} \nonumber\\
&=\frac{1}{\alpha}\frac{a^{*}(u-\Pi_h^ku,\Pi_h^{*}(u_h-\Pi_h^{k}u))}{|u_h-\Pi_h^{k}u|_1} \nonumber\\
&\leq C\frac{|u-\Pi_h^ku|_1\,|u_h-\Pi_h^{k}u|_1}{|u_h-\Pi_h^{k}u|_1}\nonumber\\
&\leq C |u-\Pi_h^ku|_1.
\end{align}
Then,
\begin{align*}
|u-u_h|_1\leq|u-\Pi_h^ku|_1+|u_h-\Pi_h^ku|_1\leq C|u-\Pi_h^ku|_1\leq Ch^k\|u\|_{k+1},
\end{align*}
which gives (\ref{eq:H1 estimate equation}).
\end{proof}

\subsection{$L^2$ estimate}
\label{subsec:L2}

Consider the auxiliary problem: for any $g\in L^2(\Omega)$, find $\omega\in H_0^1(\Omega)$ such that
\begin{align}\label{eq:auxiliary equation}
a(v,\omega)=(g,v),\quad \forall v\in H_0^1(\Omega),
\end{align}
where $a(\cdot,\cdot)$ is the standard bilinear form of the finite element method. 
This problem has a unique solution $\omega\in H_0^1(\Omega)\cap H^2(\Omega)$ satisfying
\begin{align}
\label{eq:regular equation}
||\omega||_2\leq C||g||_0.
\end{align}

Recall that the first dual layer $\mathcal{T}_{\mathrm{I}}$ constitutes a complete partition of $\Omega$.
Taking $v_{h,II}\equiv0$ ($v_{h}=(v_{I},v_{II})$) in (\ref{eq:orthogonality equation}), one also has the orthogonality
on the first dual layer, i.e.,
\begin{align}
\label{eq:auxiliary first layer equation}
a_{\mathrm{I}}^{*}(u-u_h,\Pi_{\mathrm{I}}^{*}  (\Pi_{h}^{1}\omega)  )=0\quad \forall \omega\in H_0^1(\Omega).
\end{align}

\begin{theorem}($L^2$ estimate)
\label{thm:L2 estimate}
Under the same assumptions as in Theorem~\ref{thm:H1 estimate},
the following estimate holds for each $k$th-order FVE-2L scheme ($k=2, \, 3, \, 4$),
\begin{align}
\label{eq:L2 estimate equation}
||u-u_h||_0\leq C h^{k+1}\|u\|_{k+1}.
\end{align}
\end{theorem}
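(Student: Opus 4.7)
The plan is to apply the Aubin-Nitsche duality argument, using the partial Galerkin orthogonality on the first dual layer recorded in (\ref{eq:auxiliary first layer equation}). I would take $g = u - u_h \in L^2(\Omega)$ in the auxiliary problem (\ref{eq:auxiliary equation}) to obtain a unique $\omega \in H_0^1(\Omega) \cap H^2(\Omega)$ with $\|\omega\|_2 \leq C\|u-u_h\|_0$ by (\ref{eq:regular equation}). This gives the starting identity
\begin{align*}
\|u-u_h\|_0^2 = (g, u-u_h) = a(u-u_h,\omega) = a(u-u_h,\omega-\Pi_h^1\omega) + a(u-u_h,\Pi_h^1\omega).
\end{align*}

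For the first piece, Cauchy-Schwarz combined with the $H^1$ estimate from Theorem~\ref{thm:H1 estimate}, the linear-interpolation bound $|\omega-\Pi_h^1\omega|_1 \leq Ch\|\omega\|_2$, and (\ref{eq:regular equation}) already delivers a contribution of order $Ch^{k+1}\|u\|_{k+1}\|u-u_h\|_0$. For the second piece I would insert the zero from (\ref{eq:auxiliary first layer equation}) and rewrite
\begin{align*}
a(u-u_h,\Pi_h^1\omega) = a(u-u_h,\Pi_h^1\omega) - a_{\mathrm{I}}^{*}(u-u_h,\Pi_{\mathrm{I}}^{*}\Pi_h^1\omega),
\end{align*}
reducing the task to controlling the asymmetry between the standard Galerkin form and the first-layer FVE form when the test direction is the piecewise-linear function $\Pi_h^1\omega$.

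The main obstacle is bounding this asymmetry by $Ch^{k+1}\|u\|_{k+1}\|u-u_h\|_0$ while only using $u \in H^{k+1}$; this is exactly the step where single-layer high-order FVE analyses are forced to invoke $u \in H^{k+2}$. I would proceed element by element: on each $K$, decompose $u - u_h = (u - \Pi_h^k u) + (\Pi_h^k u - u_h)$, split $a_{\mathrm{I}}^{*}(\cdot,\Pi_{\mathrm{I}}^{*}\Pi_h^1\omega)$ into its volume and dual-edge-jump contributions as in Theorem~\ref{thm:boundedness}, and match them term-by-term against $a(\cdot,\Pi_h^1\omega)$. The crucial algebraic gain comes from the two-layer construction: because all interior degrees of freedom have been absorbed into the second dual layer, the first-layer test basis $\{\hat{\psi}_j\}$ reproduces linear test directions up to terms annihilated by the row-sum relation (\ref{eq:restictions_pih}), which is exactly what Lemma~\ref{lem:boundedness lemma 1} exploited for boundedness. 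This row-sum cancellation, combined with standard interpolation and trace estimates and Theorem~\ref{thm:H1 estimate}, extracts an extra factor of $h$ beyond the $H^1$ rate without needing a further derivative of $u$.

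Collecting the two contributions, dividing by $\|u-u_h\|_0$, and rearranging yields (\ref{eq:L2 estimate equation}). The nontrivial portion of the argument is entirely in the asymmetry estimate above; the rest is standard duality bookkeeping.
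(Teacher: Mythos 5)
Your duality setup, the splitting into $I_1=a(u-u_h,\omega-\Pi_h^1\omega)$ and $I_2=a(u-u_h,\Pi_h^1\omega)-a_{\mathrm{I}}^{*}(u-u_h,\Pi_{\mathrm{I}}^{*}(\Pi_h^1\omega))$, and the treatment of $I_1$ all coincide with the paper's proof. The gap is in $I_2$, which you correctly identify as the crux but then only sketch. The mechanism you propose --- a term-by-term matching in which the row-sum relation (\ref{eq:restictions_pih}) ``annihilates'' the discrepancy up to remainders that interpolation and trace estimates then beat down by an extra power of $h$ --- is not the argument that closes the proof, and it is doubtful it could: (\ref{eq:restictions_pih}) only forces $\Pi_{\mathrm{I}}^{*}$ to reproduce \emph{constants}, and bounding the consistency defect of an FVE bilinear form against a fixed test function by trace inequalities is exactly the maneuver that costs the extra derivative ($u\in H^{k+2}$) in the single-layer analyses you are trying to avoid. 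Nothing in your sketch explains why only $u\in H^{k+1}$ is needed.

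What the paper actually uses is an exact identity, not an approximate cancellation: the first-layer test space $V_{K,\mathrm{I}}$ contains the piecewise linear functions on $Q_{K,1}\cup Q_{K,2}\cup Q_{K,3}$, and the trial-to-test mapping is arranged so that $\Pi_{\mathrm{I}}^{*}(\Pi_h^1\omega)=\Pi_h^1\omega$. Integrating both $a(u-u_h,\Pi_h^1\omega)$ and $a_{\mathrm{I}}^{*}(u-u_h,\Pi_{\mathrm{I}}^{*}(\Pi_h^1\omega))$ by parts element by element --- the internal dual-edge contributions cancel because $\Pi_h^1\omega$ is a single linear function on each $K$ and hence continuous across the dual interfaces inside $K$ --- the two expressions become literally identical, so $I_2=0$; no extra factor of $h$ has to be extracted and no further regularity of $u$ enters. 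Your proposal is therefore missing the one nonstandard idea of the proof, the exact reproduction of piecewise linears by $\Pi_{\mathrm{I}}^{*}$ (a strictly stronger property than the constant-reproduction encoded in (\ref{eq:restictions_pih}) and in Lemma~\ref{lem:boundedness lemma 1}), and substitutes an approximate-cancellation strategy that, as written, does not yield the claimed $O(h^{k+1})$ bound under $u\in H^{k+1}$.
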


\begin{proof}
Letting $g=v=u-u_h$ in (\ref{eq:auxiliary equation}) and combining it with (\ref{eq:auxiliary first layer equation}), there holds
\begin{align}\label{eq:L2 estimate proof 1}
||u-u_h||_0^2&=a(u-u_h,\omega)\nonumber\\
&=a(u-u_h,\omega-\Pi_h^1\omega)+a(u-u_h,\Pi_h^1\omega)-a_{\mathrm{I}}^{*}(u-u_h,\Pi_{\mathrm{I}}^{*}  (\Pi_{h}^{1}\omega)  )\nonumber\\
&=I_1+I_2,
\end{align}
where
\begin{align*}
I_1&=a(u-u_h,\omega-\Pi_h^1\omega),\\
I_2&=a(u-u_h,\Pi_h^1\omega)-a_{\mathrm{I}}^{*}(u-u_h,\Pi_{\mathrm{I}}^{*}  (\Pi_{h}^{1}\omega)  ).
\end{align*}

For $I_1$, using Theorem~\ref{thm:H1 estimate} we get
\begin{align}\label{eq:L2 estimate proof 2}
|I_1|&=|\iint_{\Omega}(\mathbb{D}\nabla(u-u_h))\cdot\nabla(\omega-\Pi_h^1\omega)\,\ud x\ud y|\nonumber\\
&\leq C|u-u_h|_1\,|\omega-\Pi_h^1\omega|_1\nonumber\\
&\leq C h^{k+1}\|u\|_{k+1}\,\|\omega\|_2.
\end{align}

For $I_2$, using the divergence theorem, one has
\begin{align}
a(u-u_h,\Pi_h^1\omega)  =&\iint_{\Omega}(\mathbb{D}\nabla(u-u_h))\cdot\nabla(\Pi_h^1\omega)\,\ud x\ud y\nonumber\\
        =&\sum_{K\in\mathcal{T}_h}  \Big(-\iint_{K}\nabla\cdot(\mathbb{D}\nabla(u-u_h))(\Pi_h^{1}\omega)\,\ud x\ud y\nonumber\\
            &\qquad+\int_{\partial K}(\mathbb{D}\nabla(u-u_h))\cdot\vec{n}\,(\Pi_h^1\omega)\,\ud s  \Big),      \label{eq:L2 estimate proof 3}\\
a_{\mathrm{I}}^{*}(u-u_h,\Pi_{\mathrm{I}}^{*}\omega)
    =&\sum_{K\in\mathcal{T}_h}\sum_{K_{\mathrm{I}}^{*}\in\mathcal{T}_{\mathrm{I}}^{*}}
        \Big(\iint_{K_{\mathrm{I}}^{*}\cap K}(\mathbb{D}\nabla(u-u_h))\cdot\nabla(\Pi_{\mathrm{I}}^{*}  (\Pi_{h}^{1}\omega)  )\,\ud x\ud y\nonumber\\
     &\qquad    -\int_{\partial K_{\mathrm{I}}^{*}\cap K}(\mathbb{D}\nabla(u-u_h))\cdot\vec{n}\,(\Pi_{\mathrm{I}}^{*}  (\Pi_{h}^{1}\omega) )\,\ud s     \Big)\nonumber\\
    =&\sum_{K\in\mathcal{T}_h}\Big(-\iint_{K}\nabla\cdot(\mathbb{D}\nabla(u-u_h))(\Pi_{\mathrm{I}}^{*}   (\Pi_{h}^{1}\omega)  )\,\ud x\ud y\nonumber\\
     & \qquad   +\int_{\partial K}(\mathbb{D}\nabla(u-u_h))\cdot\vec{n}\,(\Pi_{\mathrm{I}}^{*}   (\Pi_{h}^{1}\omega)   )\,\ud s    \Big).      \label{eq:L2 estimate proof 4}
\end{align}
Notice that the test space $V_{K,I}$ contains the piecewise linear space over $K=Q_{K,1}\cup Q_{K,2}\cup Q_{K,3}$. The mapping $\Pi_{\mathrm{I}}^{*}$ maps the linear function $\Pi_{h}^{1}\omega$ into itself on $Q_{K,i}$ ($i=1,2,3$), which implies $\Pi_{\mathrm{I}}^{*}   (\Pi_{h}^{1}\omega) = \Pi_{h}^{1}\omega$ on all $K\in\mathcal{T}_{h}$. 
Combining (\ref{eq:L2 estimate proof 3}) with (\ref{eq:L2 estimate proof 4}), one has
\begin{align}
I_{2} = 0.
\end{align}
The above estimate together with (\ref{eq:auxiliary equation}), (\ref{eq:L2 estimate proof 1}), and (\ref{eq:L2 estimate proof 2}) leads to
\begin{align}\label{eq:L2 estimate proof 7}
||u-u_h||_0^2\leq |I_1|+|I_2|\leq Ch^{k+1}\|u\|_{k+1}\,\|\omega\|_2\leq Ch^{k+1}\|u\|_{k+1}\,||u-u_h||_0,
\end{align}
which yields (\ref{eq:L2 estimate equation}).
\end{proof}

\section{Numerical experiments}
\label{sec:numerical experiments}

In this section we present numerical results to demonstrate the performance of FVE-2L schemes ($k$=2,\, 3,\, 4)
on triangular meshes for elliptic and linear elasticity problems. We focus on  the conservation properties on the two layers of the dual mesh
and the $H^1$ and $L^2$ convergence rates of FVE-2L schemes. We also compare the condition number between
the FVE-2L schemes, single layer FVE schemes (FVEM) from \cite{Wang.2016}, and finite element schemes (FEM)
for Example~\ref{ex:Elliptic problem}. The triangular mesh is obtained from a rectangular mesh by partitioning each
rectangular element into two triangular elements.

\begin{example}[Elliptic problem]
\label{ex:Elliptic problem}
Consider the elliptic problem
\begin{align}\label{eq:Elliptic_equation}
\left\{
\begin{array}{cl}
-\nabla\cdot(\mathbb{D}\nabla u)= -5\exp(x+2y),& \mathrm{in}\,\,\Omega = (-1,1)\times (-1, 1),    \\
u=\exp(x+2y), &  \mathrm{on}\,\,\partial\Omega.
\end{array}
\right.
\end{align}
Here, $\mathbb{D}=\mathbb{I}$ and the exact solution is $u=\exp(x+2y)$.
%Apply $k$th-order FVE-2L schemes ($k=2,3,4$) to (\ref{eq:Elliptic_equation}) with meshsize $h=1/4$.
Define the local conservation errors on each dual element $K^{*}\in\{K_{I}^{*},\,K_{II}^{*}\}$ in flux and equation forms as
\begin{align*}
C_{K^{*},flux} & =-\int_{\partial K^{*}}(\mathbb{D}\nabla u_h)\cdot\vec{n}\,\ud s-\iint_{K^{*}}f\,\ud x\ud y,  \\
C_{K^{*},equa} & =-\iint_{K^{*}}\nabla\cdot(\mathbb{D}\nabla u_h)\,\ud x \ud y-\iint_{K^{*}}f\,\ud x\ud y.
\end{align*}
The corresponding global conservation errors on $\mathcal{T}^{*}\in\{\mathcal{T}_{I}^{*},\,\mathcal{T}_{II}^{*}\}$ are given by
\begin{align*}
C_{\mathcal{T}^{*},flux} & =  \sum_{K^{*}\in\mathcal{T}^{*}}C_{K^{*},flux},  \\
C_{\mathcal{T}^{*},equa} & = \sum_{K^{*}\in\mathcal{T}^{*}}C_{K^{*},equa}.
\end{align*}

\begin{figure}[!htbp]
    \centering
    \subfigure{
    \rotatebox{90}{\scriptsize{~~~~~~~~~~Quadratic}}
    \begin{picture}(-3,-3)
    \put(20,90){$\log_{10} |C_{K_{I}^{*},flux}|$}
    \end{picture}
    \begin{minipage}[t]{.20\textwidth}
      \centering
      \includegraphics*[width=110pt]{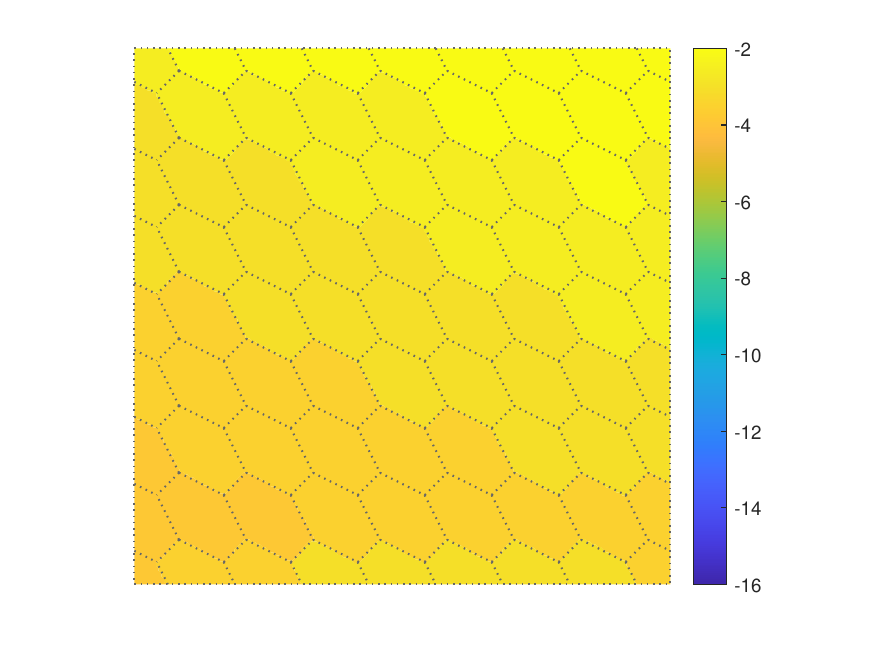}
    \end{minipage}}
    \subfigure{
    \begin{picture}(-3,-3)
    \put(20,90){$\log_{10} |C_{K_{II}^{*},flux}|$}
    \end{picture}
    \begin{minipage}[t]{.20\textwidth}
    \centering
      \includegraphics[width=110pt]{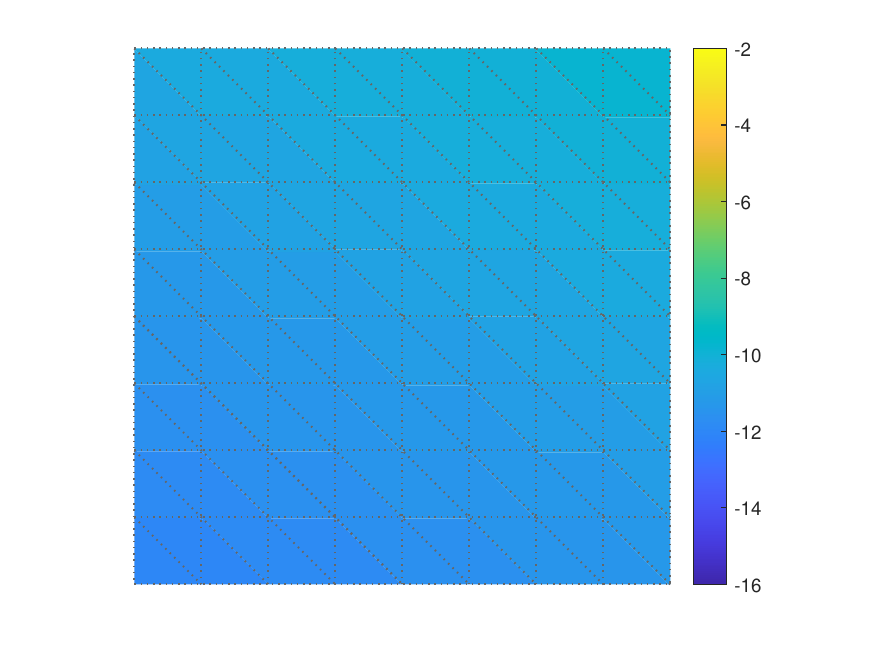}
    \end{minipage}}
    \subfigure{
    \begin{picture}(-3,-3)
    \put(20,90){$\log_{10} |C_{K_{I}^{*},equa}|$}
    \end{picture}
    \begin{minipage}[t]{.20\textwidth}
    \centering
      \includegraphics[width=110pt]{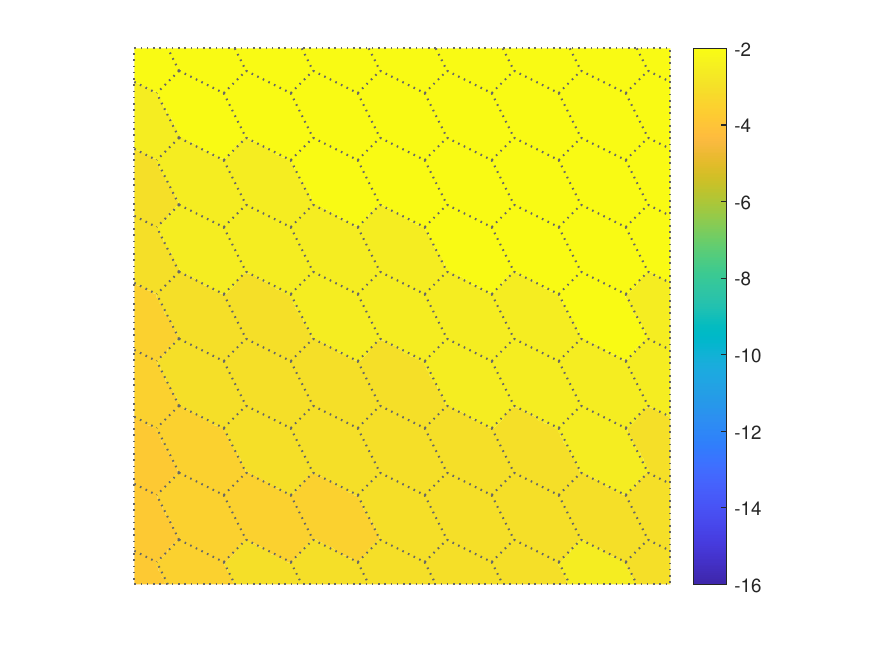}
    \end{minipage}}
    \subfigure{
    \begin{picture}(-3,-3)
    \put(20,90){$\log_{10} |C_{K_{II}^{*},equa}|$}
    \end{picture}
    \begin{minipage}[t]{.20\textwidth}
    \centering
      \includegraphics[width=110pt]{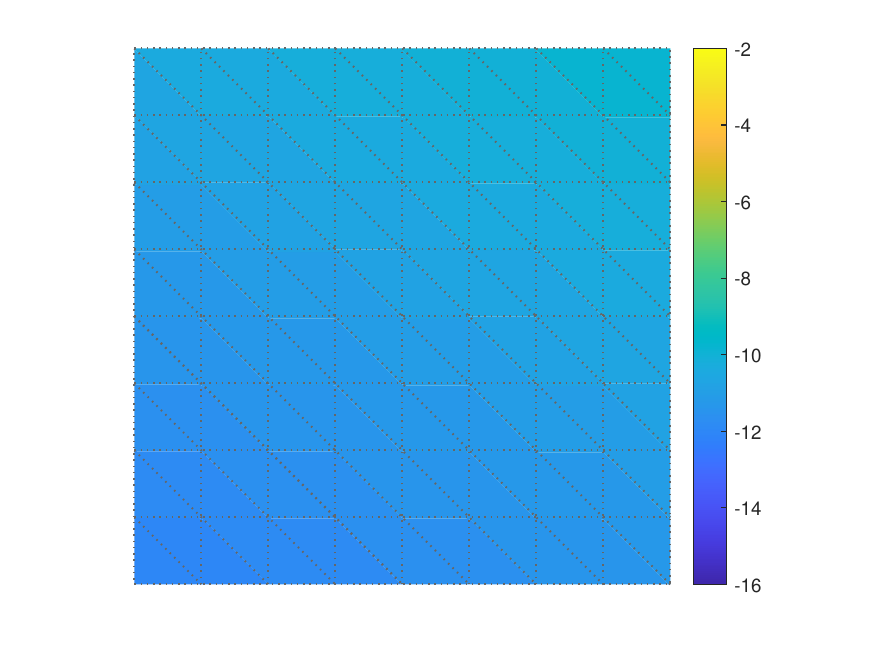}
    \end{minipage}}
    \qquad
    \subfigure{
    \rotatebox{90}{\scriptsize{~~~~~~~~~~~~Cubic}}
    \begin{minipage}[t]{.20\textwidth}
      \centering
      \includegraphics*[width=110pt]{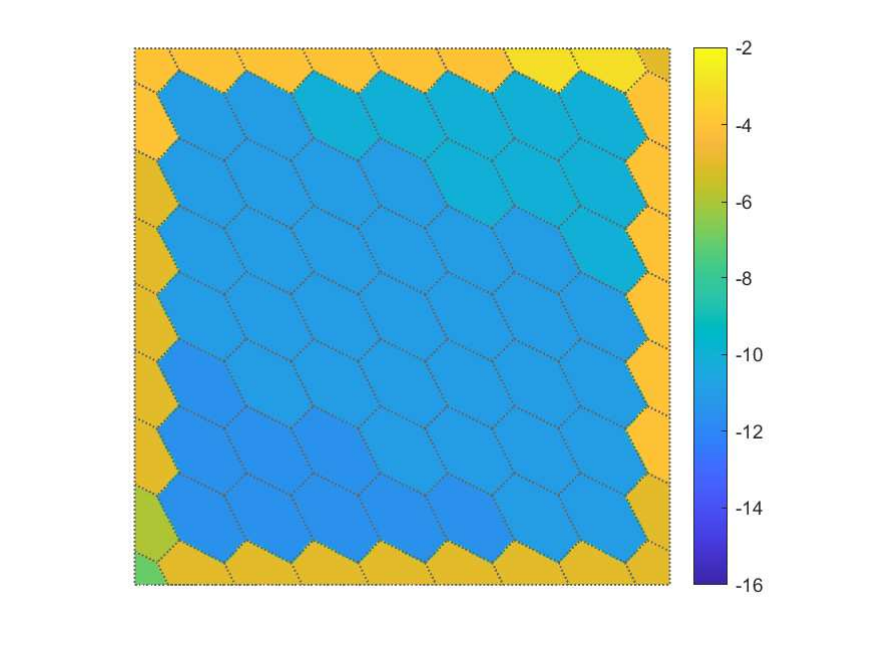}
    \end{minipage}}
    \subfigure{
    \begin{minipage}[t]{.20\textwidth}
    \centering
      \includegraphics[width=110pt]{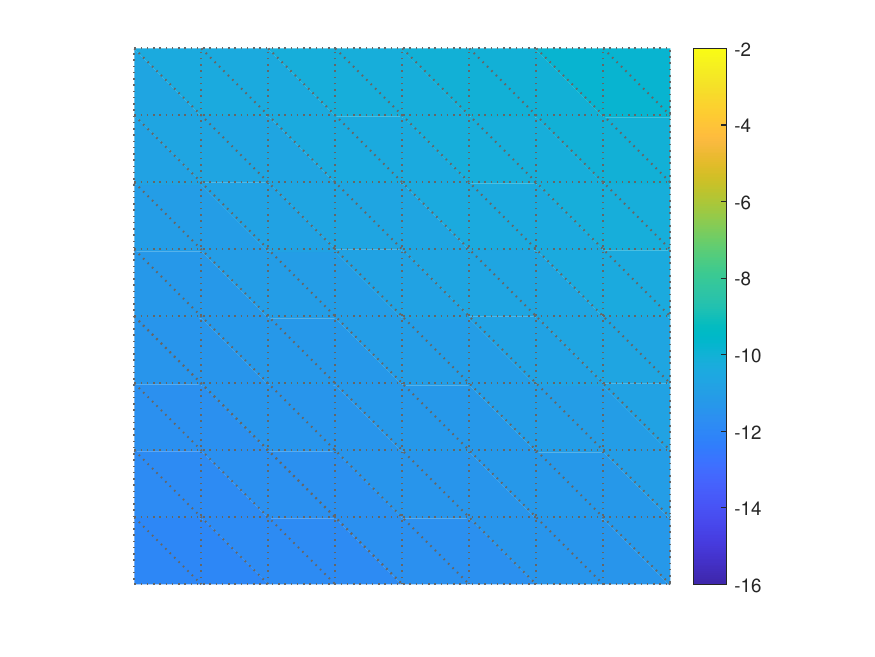}
    \end{minipage}}
    \subfigure{
    \begin{minipage}[t]{.20\textwidth}
    \centering
      \includegraphics[width=110pt]{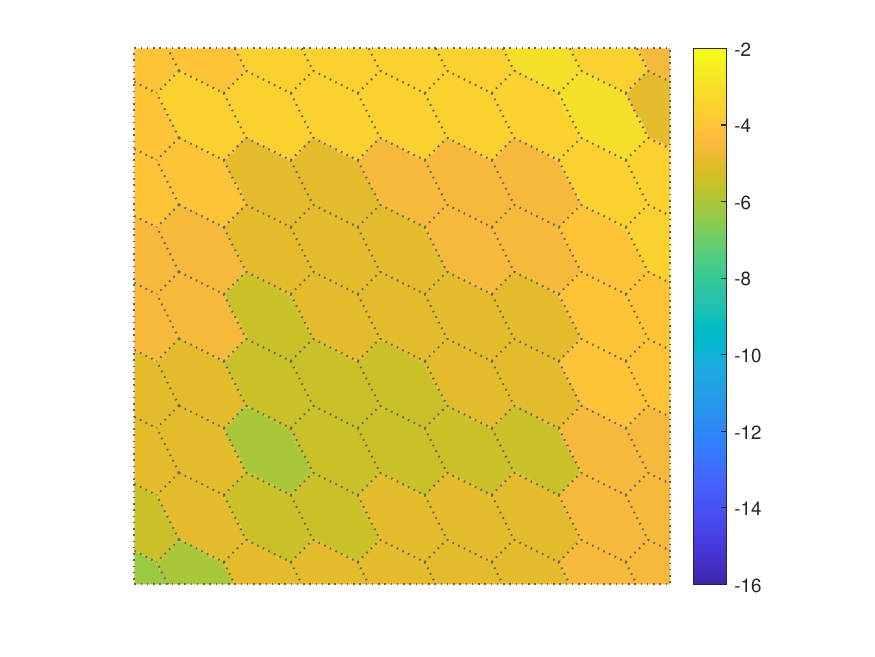}
    \end{minipage}}
    \subfigure{
    \begin{minipage}[t]{.20\textwidth}
    \centering
      \includegraphics[width=110pt]{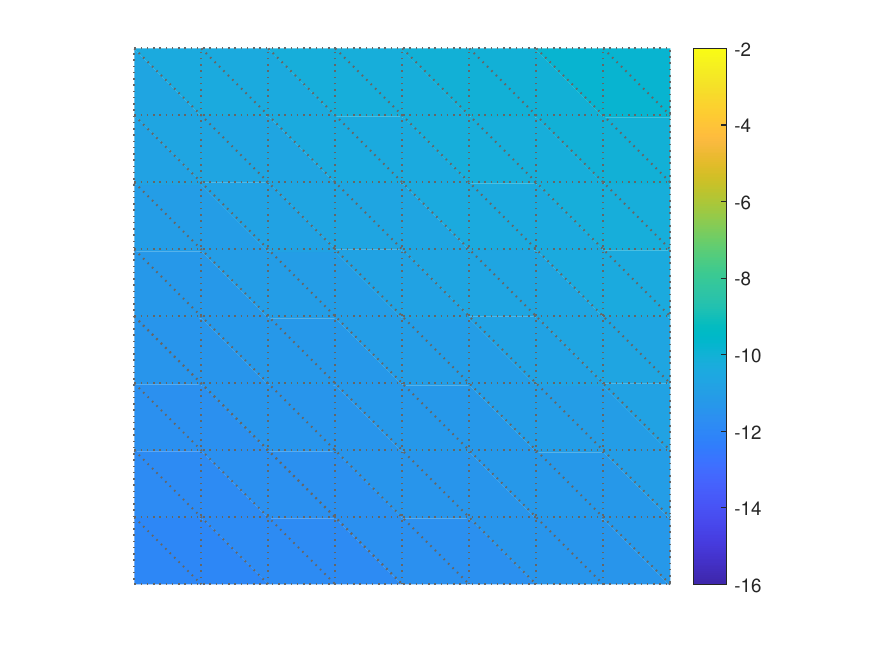}
    \end{minipage}}
    \qquad
    \subfigure{
    \rotatebox{90}{\scriptsize{~~~~~~~~~~~~Quartic}}
    \begin{minipage}[t]{.20\textwidth}
      \centering
      \includegraphics*[width=110pt]{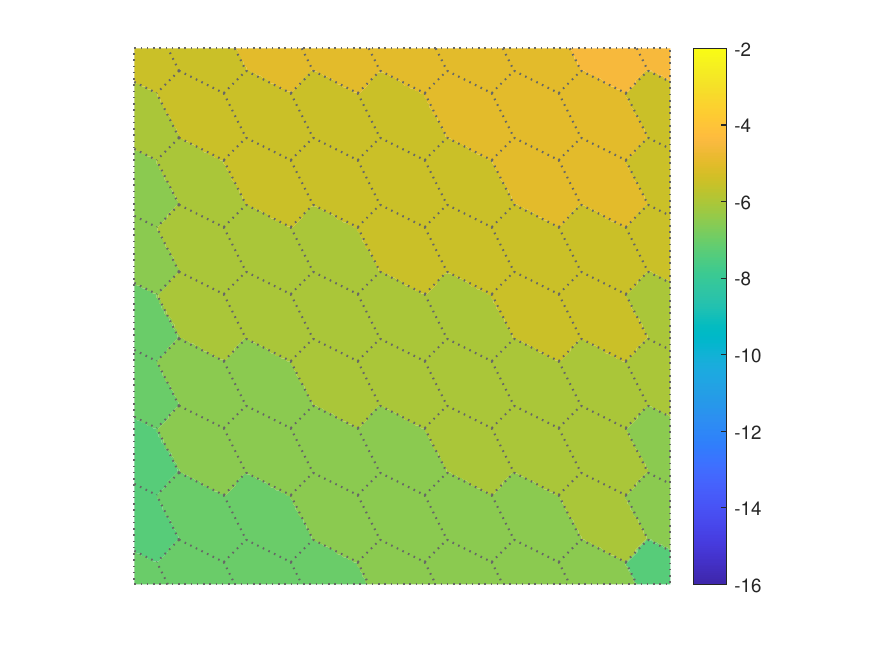}
    \end{minipage}}
    \subfigure{
    \begin{minipage}[t]{.20\textwidth}
    \centering
      \includegraphics[width=110pt]{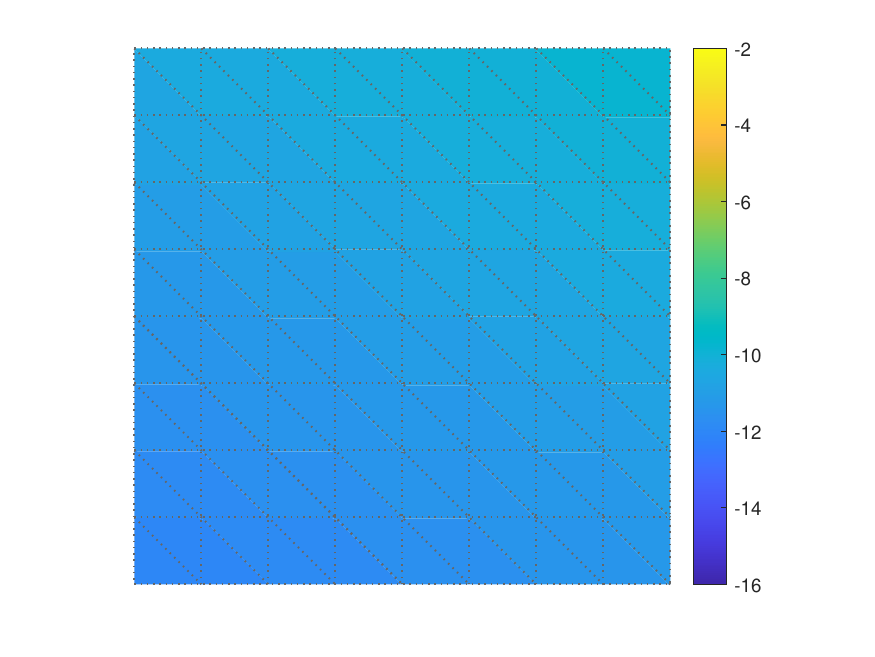}
    \end{minipage}}
    \subfigure{
    \begin{minipage}[t]{.20\textwidth}
    \centering
      \includegraphics[width=110pt]{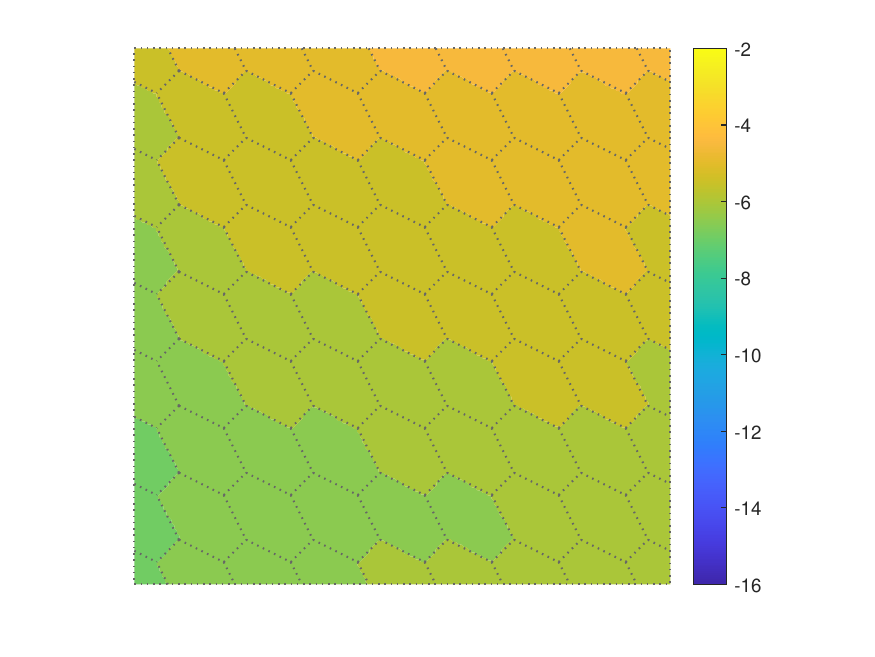}
    \end{minipage}}
    \subfigure{
    \begin{minipage}[t]{.20\textwidth}
    \centering
      \includegraphics[width=110pt]{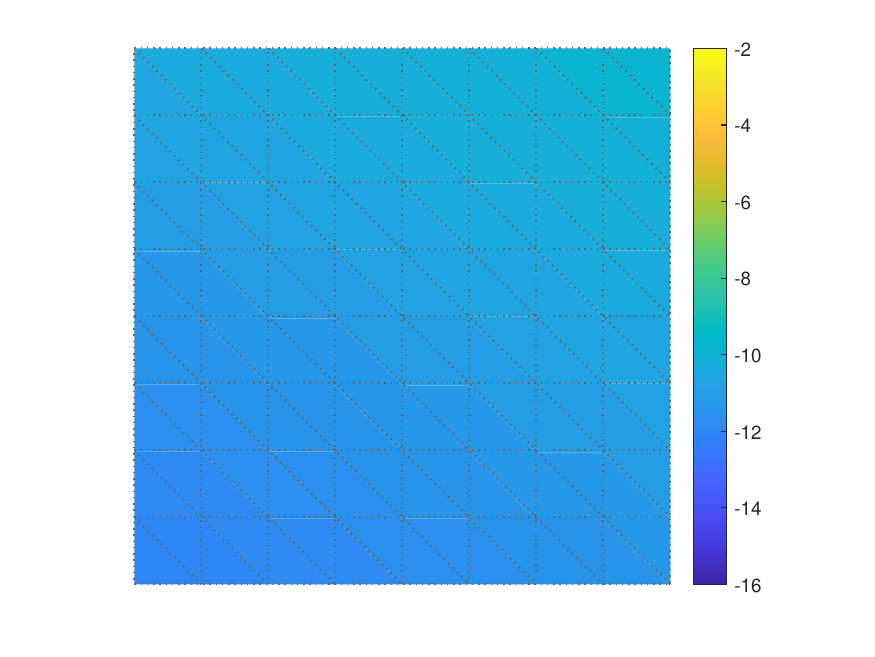}
    \end{minipage}}
    \caption{The conservation properties of the FVE-2L schemes in Example~\ref{ex:Elliptic problem}. The dual elements marked blue indicate that local conservation is maintained, while the dual elements marked yellow indicate the violation of the local conservation law. A mesh with
   $h \approx 1/8$ is used.}
    \label{fig:Conservation_error_ex1}
\end{figure}

\begin{figure}[!htbp]
    \centering
    \subfigure{
    \begin{minipage}[t]{.47\textwidth}
      \centering
      {(a) The $H^1$ convergence rate}\\
      \includegraphics*[width=180pt]{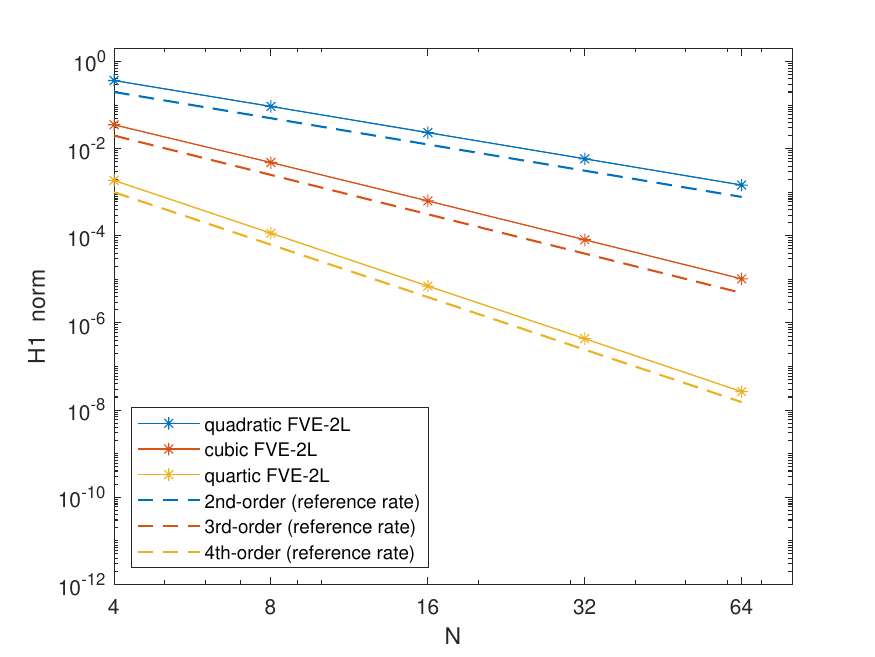}
    \end{minipage}}
    \subfigure{
    \begin{minipage}[t]{.47\textwidth}
    \centering
    {(b) The $L^2$ convergence rate}\\
      \includegraphics[width=180pt]{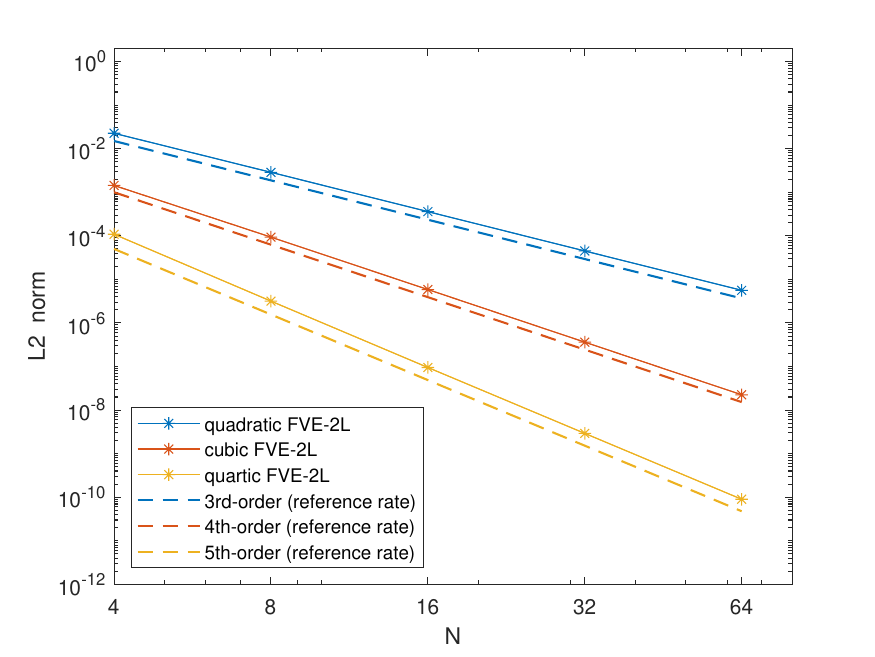}
    \end{minipage}}
    \caption{The numerical convergence rates of FVE-2L schemes for Example~\ref{ex:Elliptic problem}. Here, $N$ denotes
    the number of intervals in each axis direction of the primary mesh.}
    \label{fig:convergence_ex1}
\end{figure}

As shown in Fig.~\ref{fig:Conservation_error_ex1}, on the second dual layer $\mathcal{T}_{II}^{*}$, the FVE-2L schemes preserve the local conservation law in both flux and equation forms (the second and forth columns). On the other hand, the FVE-2L schemes violate the local conservation law in equation form on the first dual layer $\mathcal{T}_{I}^{*}$ (the third column).
Moreover, the local conservation law in flux form on the first dual layer $\mathcal{T}_{I}^{*}$ is maintained for odd-order schemes (on interior dual elements)
while being violated by even-order schemes (the first column).

From Table~\ref{tab:conservation_elliptic}, one can see that
the global conservation law is preserved in both flux form and equation form on $\mathcal{T}_{II}^{*}$
and only in equation form on $\mathcal{T}_{I}^{*}$.
Recall that the existing FVE schemes use a single layer for the dual mesh. They only maintain the local conservation law in flux form on interior dual elements (similar to the first figure in the second row of Fig.~\ref{fig:Conservation_error_ex1}) and do not preserve the local conservation law in equation form nor the global conservation law in either flux or equation form.

In Fig.~\ref{fig:convergence_ex1}, the optimal $H^1$ and $L^2$ convergence rates of the FVE-2L schemes are verified.

In Fig.~\ref{fig:E1_conditionNumber} the condition number of the stiffness matrix for the FVE-2L, FVEM, and FEM schemes
is plotted as a function of $N$, the number of intervals in each axial direction of the primary mesh.
Here, the condition number of a stiffness matrix $A$ is defined as (e.g., see \cite{Wang.2019})
\begin{equation}
{\kappa}(A) = \frac{\sigma_{max}(A)}{\lambda_{\min}((A + A^T)/2)} ,
\label{cond-0}
\end{equation}
where $\sigma_{max}(A)$ is the largest singular value and $\lambda_{\min}((A + A^T)/2)$
is the minimal eigenvalue of the symmetric part of $A$. It is known \cite{Eisenstat.1983} that the asymptotic convergence factor of
the generalized minimal residual method (GMRES) applied to linear systems associated with $A$ is bounded
by $\sqrt{1-\kappa(A)^{-2}}$. 
Fig.~\ref{fig:E1_conditionNumber} shows that the condition number of the stiffness matrix for the FVE-2L schemes
is slightly larger than those of FVEM and FEM but otherwise has the same growth rate $\mathcal{O}(\bar{h}^{-2})$,
where $\bar{h} = 1/\sqrt{N}$ is the average size of mesh elements. 

\begin{table}[htbp!]
\centering
  \caption{The global conservation law in Example~\ref{ex:Elliptic problem}}\label{tab:global conservation law for ex1}
  \label{tab:conservation_elliptic}
  \begin{tabular}{ c | c  c c c  }
  \toprule
  FVE-2L scheme & $C_{\mathcal{T}_{I,flux}^{*}}$ & $C_{\mathcal{T}_{II,flux}^{*}}$ & $C_{\mathcal{T}_{I,equa}^{*}}$ & $C_{\mathcal{T}_{II,equa}^{*}}$  \\
  \hline
  quadratic & 0.3374        & \textbf{4.2614e-09} & 4.2814e-09 & \textbf{4.2623e-09}\\
  cubic     & 0.0020        & \textbf{4.2613e-09} & 4.2816e-09 & \textbf{4.2627e-09}\\
  quartic   & 1.5682e-05   & \textbf{4.2633e-09} & 4.2805e-09 & \textbf{4.2612e-09}\\
  \bottomrule
  \end{tabular}
\end{table}

\begin{figure}[!htbp]
    \centering
    \subfigure{
    \begin{minipage}[t]{.32\textwidth}
      \centering
      {(a) Quadratic}\\
      \includegraphics*[width=120pt]{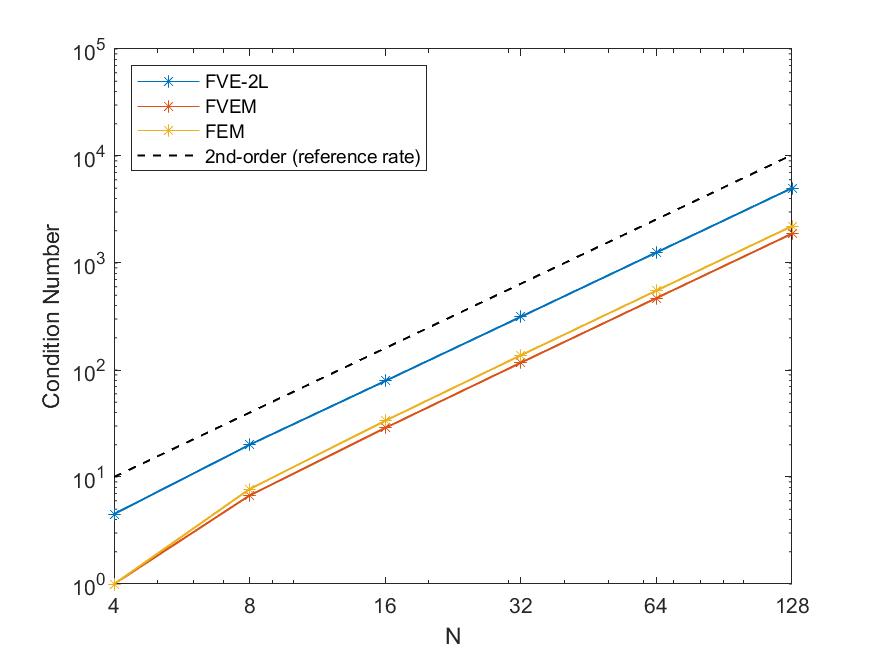}
    \end{minipage}}
    \subfigure{
    \begin{minipage}[t]{.32\textwidth}
    \centering
    {(b) Cubic}\\
      \includegraphics[width=120pt]{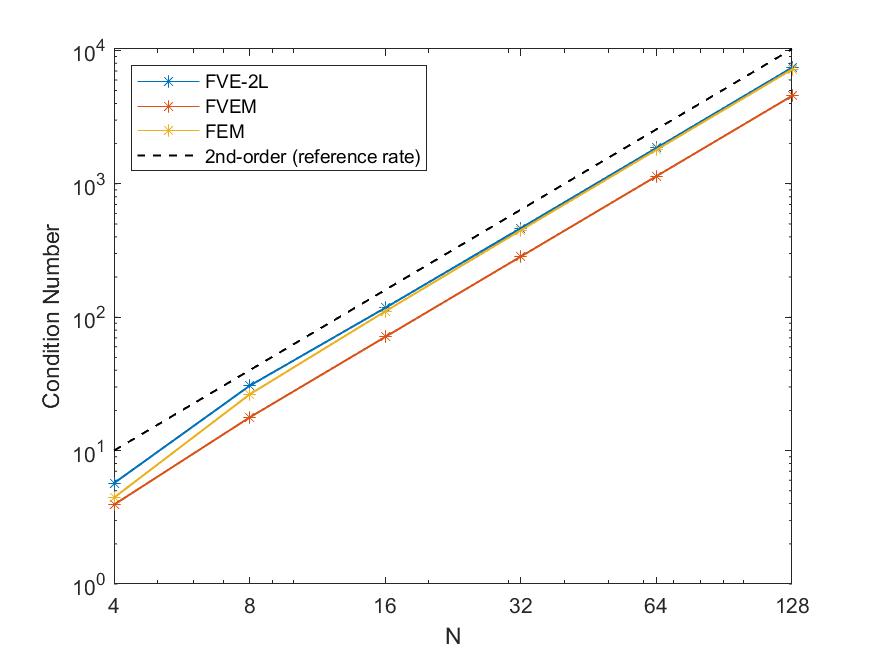}
    \end{minipage}}
   \subfigure{   
   \begin{minipage}[t]{.32\textwidth}
    \centering
    {(c) Quartic}\\
      \includegraphics[width=120pt]{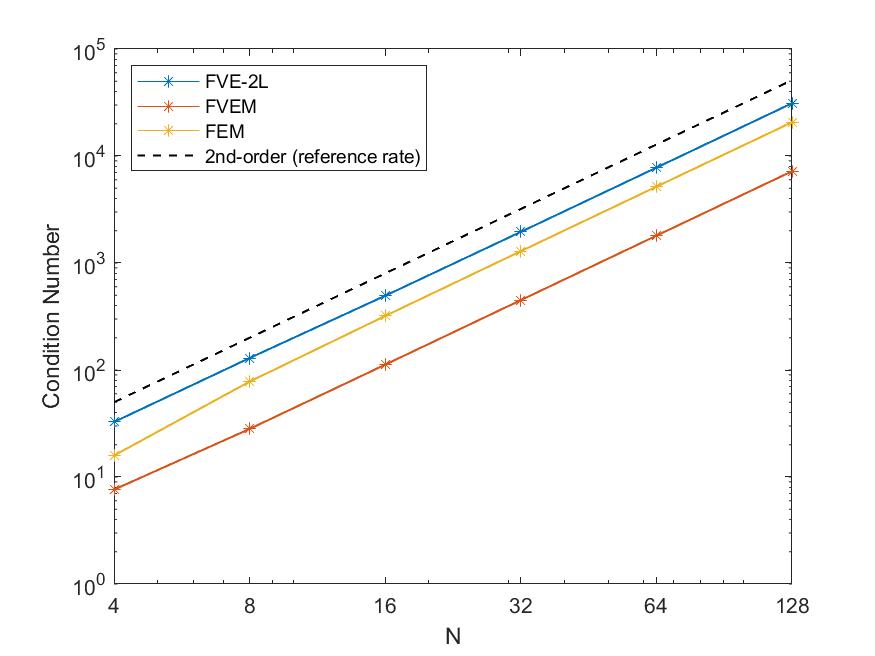}
    \end{minipage}}
    \caption{The condition number of the stiffness matrix for the FVE-2L, FVEM, and FEM schemes for Example~\ref{ex:Elliptic problem}
    is plotted as a function of $N$, the number of intervals in each axis direction of the primary mesh.}
    \label{fig:E1_conditionNumber}
\end{figure}

\end{example}

\begin{example}[Linear elasticity problem]\label{ex:Elasticity problem}
Consider the linear elasticity problem
\begin{align} \label{eq:Elasticity_equation}
\left\{
\begin{array}{cl}
-\nabla \cdot \sigma(\textbf{u})=\textbf{f}, & \mathrm{in}\,\,\Omega = (0,1)\times (0,1),    \\
\textbf{u}=(u_1,u_2)^{T}=(0,0)^{T}, &  \mathrm{on}\,\,\partial\Omega,
\end{array}
\right.
\end{align}
where $\textbf{f}$ is a given function,  the stress tensor $\sigma(\textbf{u})$ and the strain tensor $\epsilon(\textbf{u})$ are given by
\begin{align*}
\sigma(\textbf{u})=2\mu\epsilon(\textbf{u})+\lambda(\nabla\cdot\textbf{u})\mathbb{I},\qquad \epsilon(\textbf{u})=\frac{1}{2}(\nabla\textbf{u}+(\nabla\textbf{u})^{T}),
\end{align*}
and $\lambda$ and $\mu$ are Lam\'{e} constants.

\vspace{5pt}

\begin{figure}[!htbp]
    \centering
    \subfigure{
    \rotatebox{90}{\scriptsize{~~~~~~~~~~Quadratic}}
    \begin{picture}(-3,-3)
    \put(20,90){$\log_{10} |C_{K_{II}^{*},flux}^{1}|$}
    \end{picture}
    \begin{minipage}[t]{.20\textwidth}
      \centering
      \includegraphics*[width=110pt]{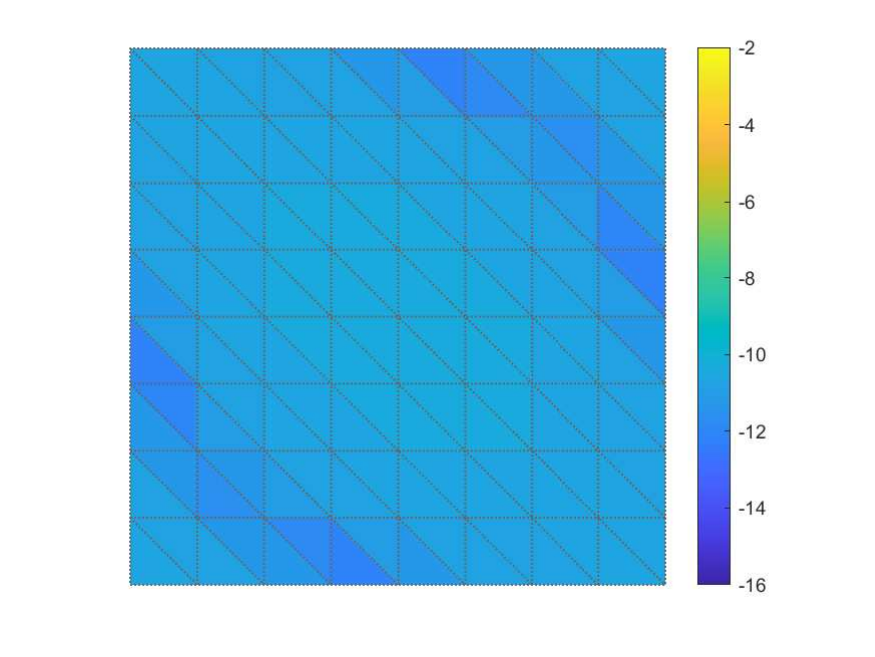}
    \end{minipage}}
    \subfigure{
    \begin{picture}(-3,-3)
    \put(20,90){$\log_{10} |C_{K_{II}^{*},flux}^{2}|$}
    \end{picture}
    \begin{minipage}[t]{.20\textwidth}
    \centering
      \includegraphics[width=110pt]{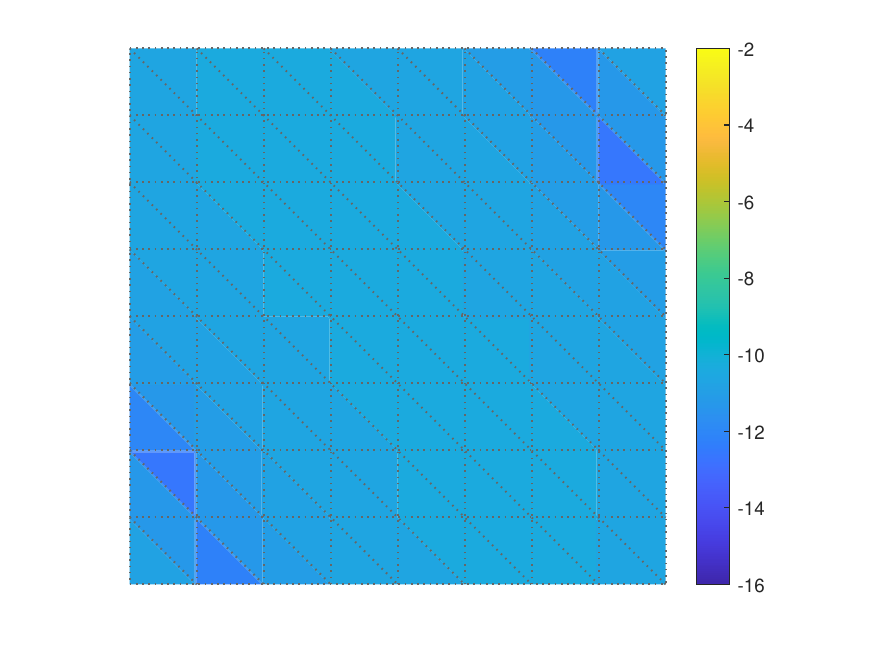}
    \end{minipage}}
    \subfigure{
    \begin{picture}(-3,-3)
    \put(20,90){$\log_{10} |C_{K_{II}^{*},equa}^{1}|$}
    \end{picture}
    \begin{minipage}[t]{.20\textwidth}
    \centering
      \includegraphics[width=110pt]{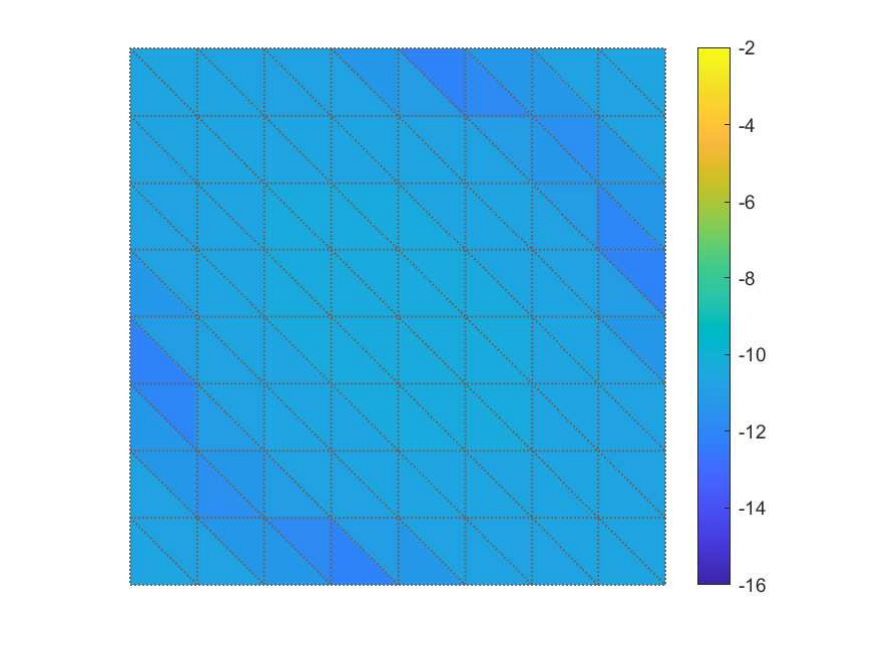}
    \end{minipage}}
    \subfigure{
    \begin{picture}(-3,-3)
    \put(20,90){$\log_{10} |C_{K_{II}^{*},equa}^{2}|$}
    \end{picture}
    \begin{minipage}[t]{.20\textwidth}
    \centering
      \includegraphics[width=110pt]{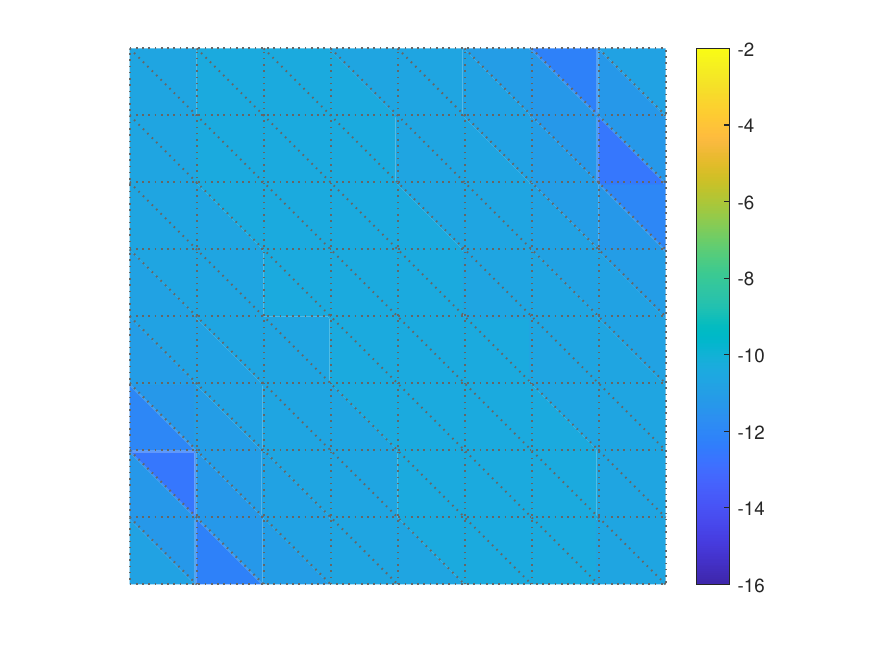}
    \end{minipage}}
    \qquad
    \subfigure{
   \rotatebox{90}{\scriptsize{~~~~~~~~~~~~Cubic}}
    \begin{minipage}[t]{.20\textwidth}
      \centering
      \includegraphics*[width=110pt]{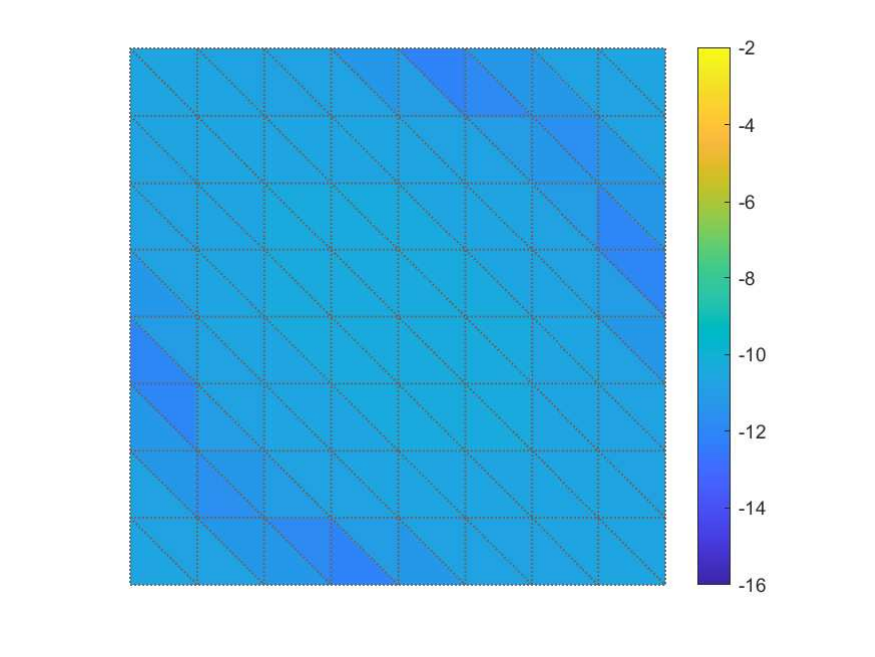}
    \end{minipage}}
    \subfigure{
    \begin{minipage}[t]{.20\textwidth}
    \centering
      \includegraphics[width=110pt]{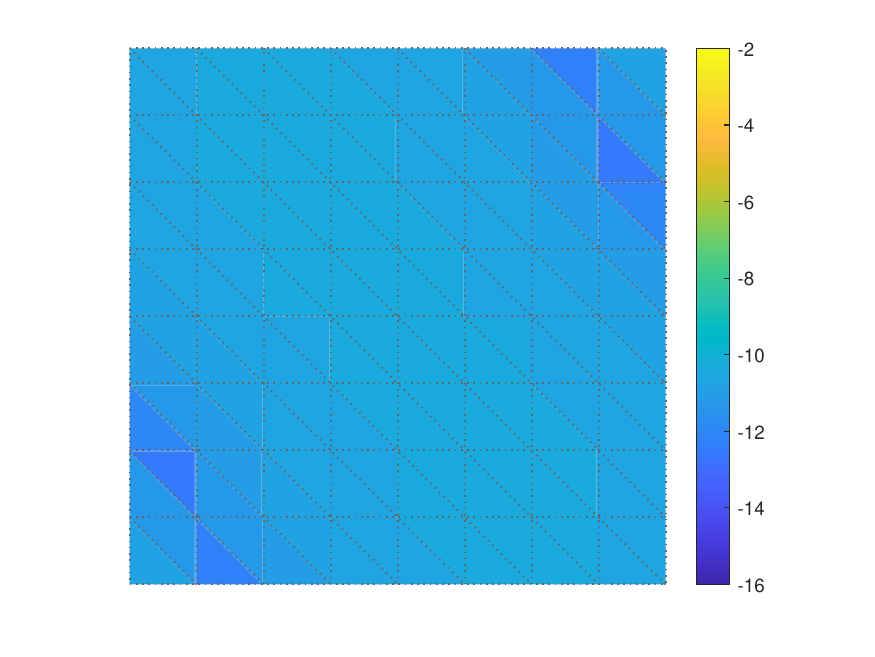}
    \end{minipage}}
    \subfigure{
    \begin{minipage}[t]{.20\textwidth}
    \centering
      \includegraphics[width=110pt]{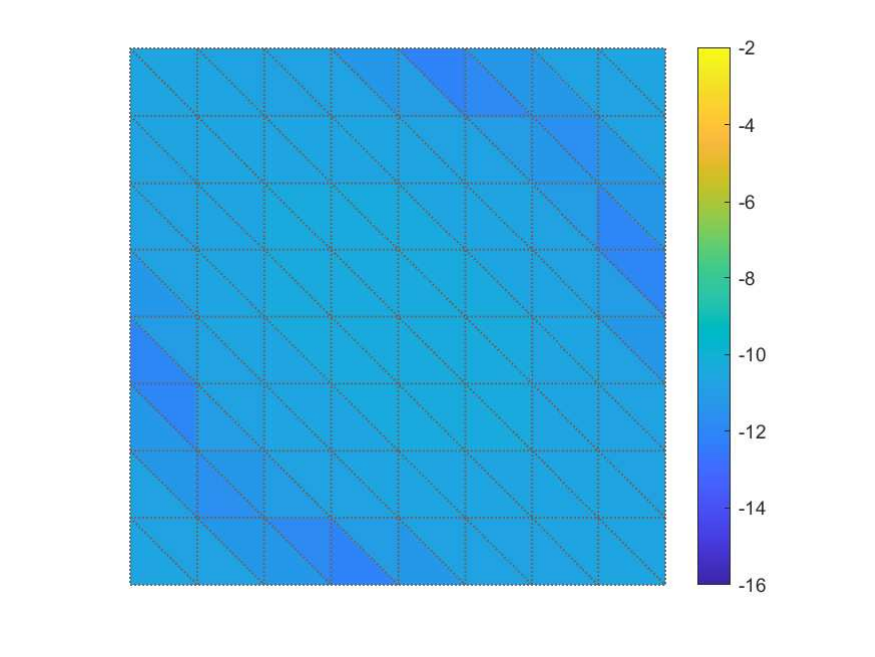}
    \end{minipage}}
    \subfigure{
    \begin{minipage}[t]{.20\textwidth}
    \centering
      \includegraphics[width=110pt]{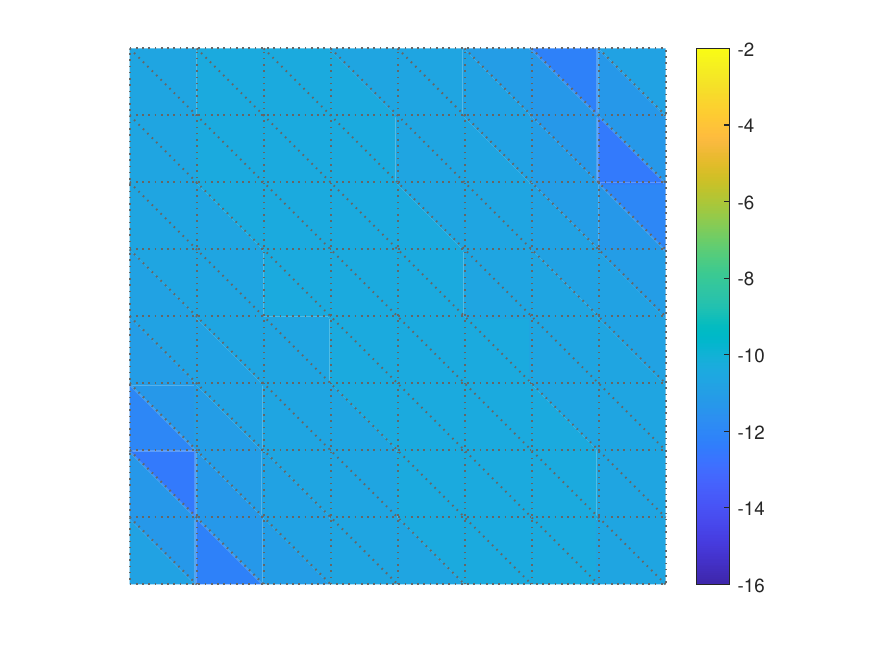}
    \end{minipage}}
    \qquad
    \subfigure{
    \rotatebox{90}{\scriptsize{~~~~~~~~~~~~Quartic}}
    \begin{minipage}[t]{.20\textwidth}
      \centering
      \includegraphics*[width=110pt]{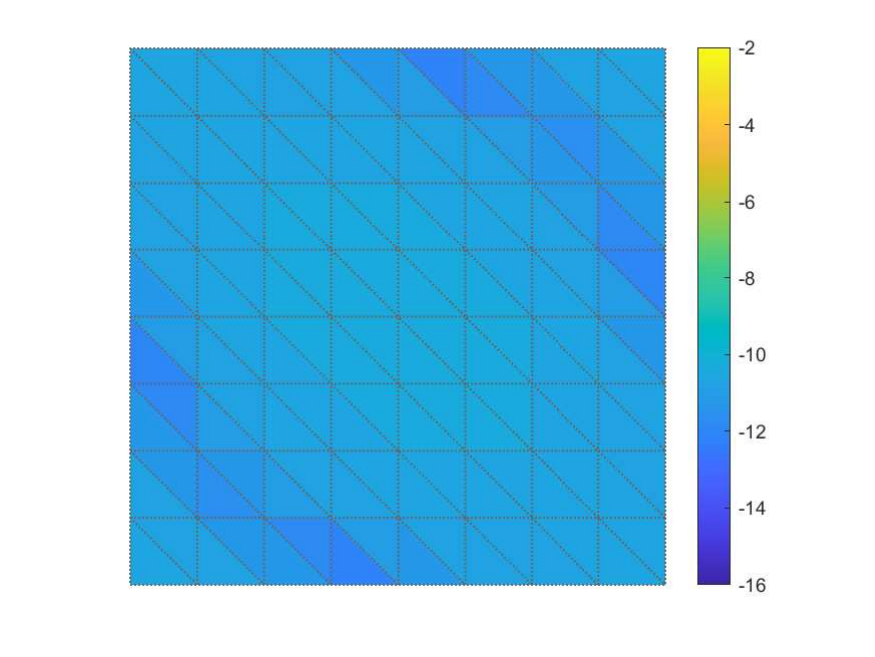}
    \end{minipage}}
    \subfigure{
    \begin{minipage}[t]{.20\textwidth}
    \centering
      \includegraphics[width=110pt]{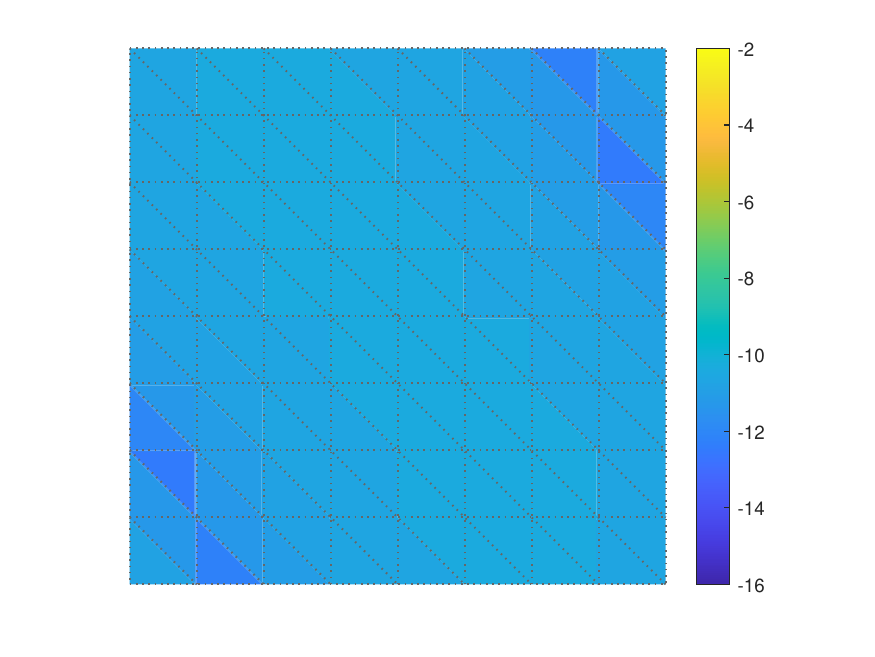}
    \end{minipage}}
    \subfigure{
    \begin{minipage}[t]{.20\textwidth}
    \centering
      \includegraphics[width=110pt]{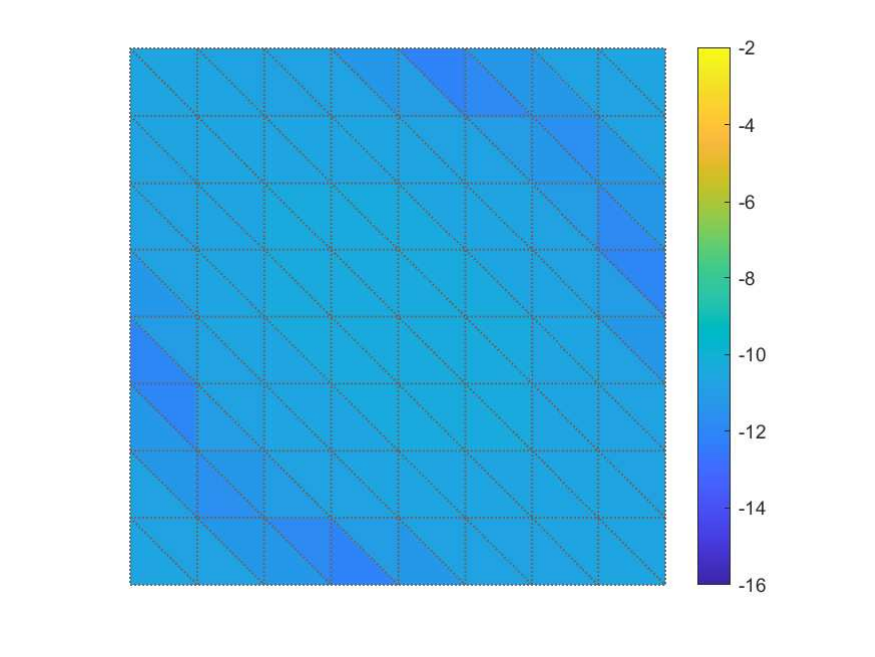}
    \end{minipage}}
    \subfigure{
    \begin{minipage}[t]{.20\textwidth}
    \centering
      \includegraphics[width=110pt]{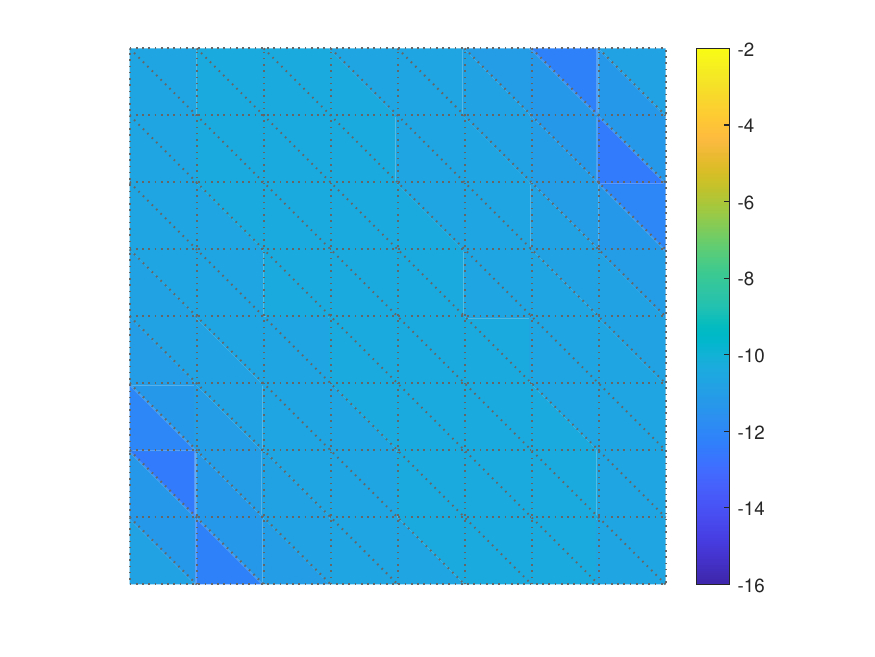}
    \end{minipage}}
    \caption{Conservation properties on $\mathcal{T}_{II}^{*}$ of the FVE-2L schemes for Example~\ref{ex:Elasticity problem}.
    The dual elements marked blue indicate that the local conservation is maintained. A mesh with
   $h \approx 1/8$ is used.}
    \label{fig:Conservation_error_ex2}
\end{figure}

\begin{figure}[!htbp]
    \centering
    \subfigure{
    \begin{minipage}[t]{.47\textwidth}
      \centering
      {(a) The $H^1$ convergence rate}\\
      \includegraphics*[width=180pt]{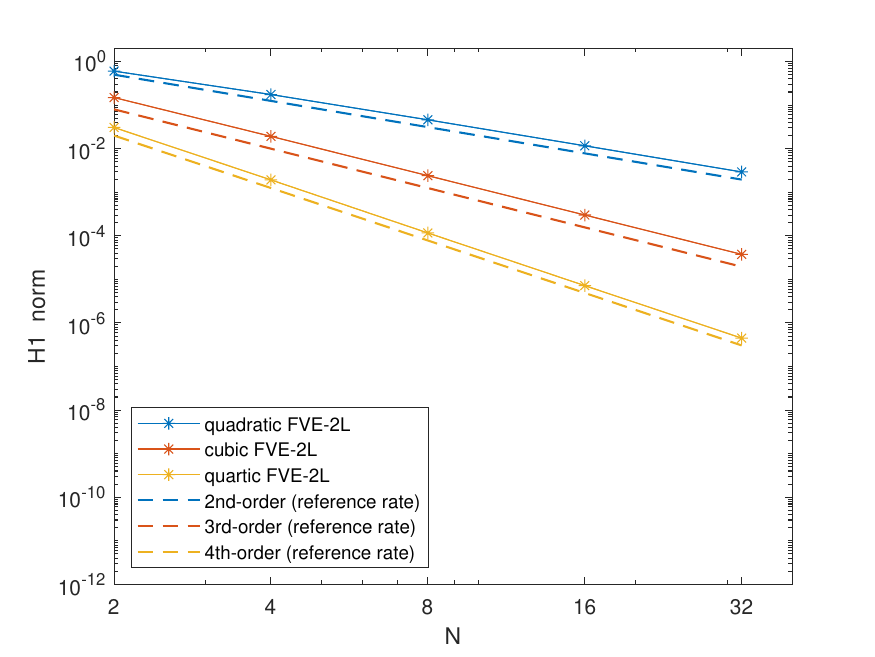}
    \end{minipage}}
    \subfigure{
    \begin{minipage}[t]{.47\textwidth}
    \centering
    {(b) The $L^2$ convergence rate}\\
      \includegraphics[width=180pt]{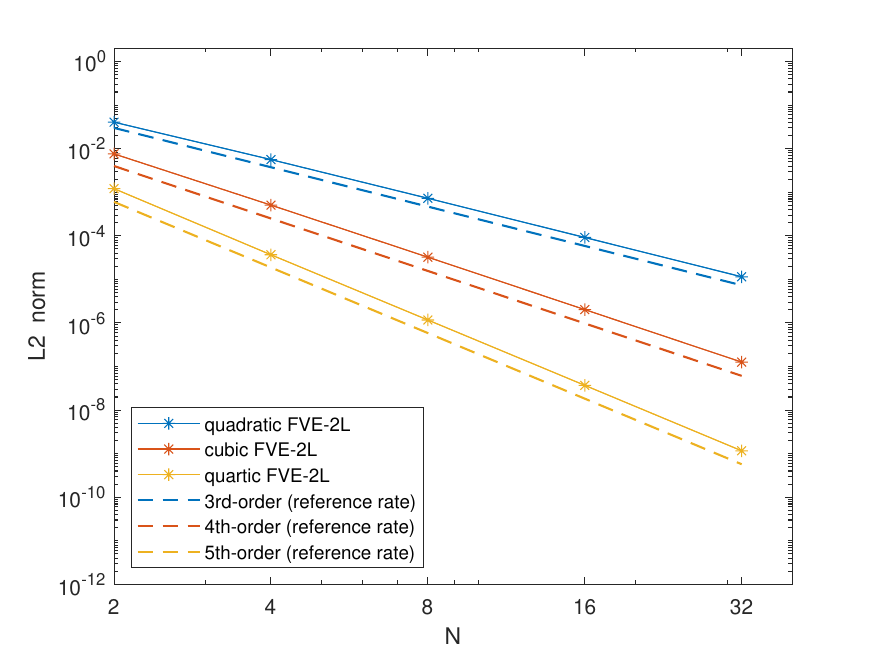}
    \end{minipage}}
    \caption{The numerical convergence rates of the FVE-2L schemes for Example~\ref{ex:Elasticity problem}.
    Here, $N$ denotes the number of intervals in each axis direction of the primary mesh.}
    \label{fig:The numerical results of k-order FVE-2L schemes for ex2}
\end{figure}

\begin{table}[htbp!]
\centering
  \caption{The global conservation law for Example~\ref{ex:Elasticity problem}}
  \label{tab:global conservation law for ex2}
  \begin{tabular}{ c | c  c c c  }
  \toprule
  FVE-2L scheme & $C_{\mathcal{T}_{II,flux}^{*}}^{1}$ & $C_{\mathcal{T}_{II,flux}^{*}}^2$ & $C_{\mathcal{T}_{II,equa}^{*}}^1$ & $C_{\mathcal{T}_{II,equa}^{*}}^2$  \\
  \hline
  quadratic & -2.7934e-09 & -3.7253e-09 & -2.7940e-09 & -3.7261e-09\\
  cubic  & -2.7986e-09 & -3.7303e-09 & -2.7994e-09 & -3.7314e-09\\
  quartic & -2.7977e-09 & -3.7281e-09 & -2.7984e-09 & -3.7293e-09\\
  \bottomrule
  \end{tabular}
\end{table}

We take $\lambda=1$ and $\mu=2$ and choose $\textbf{f}=(f_1,f_2)^{T}$ such that the exact solutions are $\mathbf{u}=(u_1,\,u_2)^{T}=(\sin(\pi x)\sin(\pi y),\,16x(x-1)y(y-1))^{T}$.
% Apply the quadratic, cubic and quartic FVE-2L schemes to (\ref{eq:Elasticity_equation}) with $h=1/8$, respectively.
Here, we focus on the conservation properties on the second dual layer $\mathcal{T}_{II}^{*}$. The local conservation errors on each $K_{II}^{*}\in\mathcal{T}_{II}^{*}$ in flux and equation forms for (\ref{eq:Elasticity_equation}) can be written as
\begin{align}
&\textbf{C}_{K_{II}^{*},flux}=-\int_{\partial K_{II}^{*}}\sigma(\textbf{u}_h)\,\vec{n}\,\ud s-\iint_{K_{II}^{*}}\textbf{f}\,\ud x \ud y,\label{eq:elasticity_conservation1}\\
&\textbf{C}_{K_{II}^{*},equa}=-\iint_{K_{II}^{*}}\nabla \cdot \sigma(\textbf{u}_h)\,\ud x\ud y-\iint_{K_{II}^{*}}\textbf{f}\,\ud x\ud y,\label{eq:elasticity_conservation2}
\end{align}
both of which are vectors with two components, i.e., $\textbf{C}_{K_{II}^{*},flux}:=(C_{K_{II}^{*},flux}^{1},C_{K_{II}^{*},flux}^{2})^{T}$ and $\textbf{C}_{K_{II}^{*},equa}:=(C_{K_{II}^{*},equa}^{1},C_{K_{II}^{*},equa}^{2})^{T}$. Accordingly, the global conservation errors can be defined as
\begin{align*}
\textbf{C}_{\mathcal{T}_{II}^{*},flux} &= (C_{\mathcal{T}_{II}^{*},flux}^{1},C_{\mathcal{T}_{II}^{*},flux}^{2})^{T}
= \sum\limits_{K_{II}^{*}\in\mathcal{T}_{II}^{*}}\textbf{C}_{K_{II}^{*},flux},   \\
\textbf{C}_{\mathcal{T}_{II}^{*},equa} &= (C_{\mathcal{T}_{II}^{*},equa}^{1},C_{\mathcal{T}_{II}^{*},equa}^{2})^{T}
= \sum\limits_{K_{II}^{*}\in\mathcal{T}_{II}^{*}}\textbf{C}_{K_{II}^{*},equa}.
\end{align*}

Fig.~\ref{fig:Conservation_error_ex2} and Table~\ref{tab:global conservation law for ex2} show that both the local and global conservation law in flux and equation forms are maintained on the second layer $\mathcal{T}_{II}^{*}$ of the dual mesh by FVE-2L schemes.
Moreover, Fig.~\ref{fig:The numerical results of k-order FVE-2L schemes for ex2} shows the optimal $H^1$ and $L^2$ convergence rates
of the FVE-2L schemes, which is consistent with the theoretical analysis.

\end{example}

\section{Conclusions}
\label{sec:conclusion}
In the previous sections we have presented a family of high-order finite volume element schemes, FVE-2L schemes,
based on the two-layer dual mesh construction. The dual mesh consists of the barycenter dual mesh of the linear finite
volume element scheme (the first dual layer) and the triangulation of the primary mesh (the second dual layer).
This two-layer strategy provides a much simpler way to construct dual mesh elements and thus high-order FVE schemes
than the single-layer strategy used in the existing high-order FVE schemes.
Moreover, the FVE-2L schemes can avoid the effect of Dirichlet boundary conditions
and preserve the conservation law in both flux and equation forms; see (\ref{eq:conservation-law-flux-for-FVE-2L})
and (\ref{eq:conservation-law-equa-for-FVE-2L}). Furthermore, we have shown that the optimal regularity
for  the $L^2$ convergence of the $k$th-order FVE-2L scheme is $u\in H^{k+1}$
(cf. Theorem~\ref{thm:L2 estimate}), which is consistent with the approximation theory and is weaker than
$u\in H^{k+2}$ required by the existing $k$th-order FVE schemes.
A key to the error analysis of the FVE-2L schemes is the introduction of the parametric trial-to-test mapping (\ref{eq:trial-to-test-mapping-def})
and the minmax optimization problem (\ref{minimum_problem}) that allows the numerical computation of  the lower bound of the minimum angle condition and leads to weaker sufficient conditions for the stability of the FVE-2L schemes. This approach can also be used for other FVE schemes.
 Finally, numerical experiments have been presented to demonstrate the conservation and convergence properties of the FVE-2L schemes.

In this work we have used triangular meshes. It is worth pointing out that the dual mesh construction and stability analysis
in this work  can be extended to quadrilateral meshes, higher-order schemes, and even some mixed schemes.

\section*{Acknowledgements}
X. Wang and X. Zhang were supported in part by the National Natural Science Foundation of China (No.12371396).
%and the National Key Research and Development Program of China (No.2020YFA0713600, 2022YFB3707300). 

\appendix
\section{Analytical expressions of test basis functions}
\label{SEC:test-basis-functions}

The basis functions of the test space restricted on the reference element $\hat{K}$ satisfy
(\ref{eq:test function}) and (\ref{eq:test function4}). Their analytical expressions are given in this appendix.
For the \textbf{quadratic} ($k=2$) FVE-2L scheme,
\begin{align*}
\begin{array}{lll}
\hat{\psi}_1=\left\{
            \begin{array}{ll}
              1 - 2x - 2y & \textrm{on}\,\, Q_{1},\\
              0           & \textrm{otherwise},
            \end{array}
            \right.
            &
    \hat{\psi}_3=\left\{
            \begin{array}{ll}
              2x - 1 & \textrm{on}\,\, Q_{2},\\
              0           & \textrm{otherwise},
            \end{array}
            \right.
            &
    \hat{\psi}_5=\left\{
            \begin{array}{ll}
              2y - 1 & \textrm{on}\,\, Q_{3},\\
              0           & \textrm{otherwise},
            \end{array}
            \right.
    \\
[0.1ex]\\
\hat{\psi}_2=\left\{
            \begin{array}{ll}
              2x & \textrm{on}\,\, Q_{1},\\
              2 - 2x - 2y & \textrm{on}\,\, Q_{2},\\
              0           & \textrm{otherwise},
            \end{array}
            \right.
            &
\hat{\psi}_4=\left\{
            \begin{array}{ll}
              2y & \textrm{on}\,\, Q_{2},\\
              2x & \textrm{on}\,\, Q_{3},\\
              0           & \textrm{otherwise},
            \end{array}
            \right.
            &
\hat{\psi}_6=\left\{
            \begin{array}{ll}
              2 - 2x - 2y & \textrm{on}\,\, Q_{3},\\
              2y & \textrm{on}\,\, Q_{1},\\
              0           & \textrm{otherwise},
            \end{array}
            \right.
    \\
[0.1ex]\\
\hat{\psi}_7 =1\quad \textrm{on}\,\, Q_{4}.
\end{array}
\end{align*}
For the \textbf{cubic} $(k=3)$ FVE-2L scheme,
\begin{align*}
\begin{array}{lll}
\hat{\psi}_1=\left\{
            \begin{array}{ll}
              1 - 3x - 3y & \textrm{on}\,\, Q_{1},\\
              0           & \textrm{otherwise},
            \end{array}
            \right.
            &
    \hat{\psi}_2=\left\{
            \begin{array}{ll}
              3x & \textrm{on}\,\, Q_{1},\\
              0           & \textrm{otherwise},
            \end{array}
            \right.
            &
    \hat{\psi}_9=\left\{
            \begin{array}{ll}
              3y & \textrm{on}\,\, Q_{1},\\
              0           & \textrm{otherwise},
            \end{array}
            \right.
    \\
[-0.25ex]\\
\hat{\psi}_3=\left\{
            \begin{array}{ll}
              3 - 3x - 3y & \textrm{on}\,\, Q_{2},\\
              0           & \textrm{otherwise},
            \end{array}
            \right.
            &
    \hat{\psi}_4=\left\{
            \begin{array}{ll}
              3x  - 2 & \textrm{on}\,\, Q_{2},\\
              0           & \textrm{otherwise},
            \end{array}
            \right.
            &
    \hat{\psi}_5=\left\{
            \begin{array}{ll}
              3y & \textrm{on}\,\, Q_{2},\\
              0           & \textrm{otherwise},
            \end{array}
            \right.
    \\
[-0.25ex]\\
\hat{\psi}_6=\left\{
            \begin{array}{ll}
              3x & \textrm{on}\,\, Q_{3},\\
              0           & \textrm{otherwise},
            \end{array}
            \right.
            &
    \hat{\psi}_7=\left\{
            \begin{array}{ll}
              3y  - 2 & \textrm{on}\,\, Q_{3},\\
              0           & \textrm{otherwise},
            \end{array}
            \right.
            &
    \hat{\psi}_8=\left\{
            \begin{array}{ll}
              3 - 3x - 3y & \textrm{on}\,\, Q_{3},\\
              0           & \textrm{otherwise},
            \end{array}
            \right.
    \\
[-0.25ex]\\
\hat{\psi}_{10} =1 \quad \textrm{on}\,\, Q_{4}.
\end{array}
\end{align*}
For the \textbf{quartic} ($k=4$) FVE-2L scheme,
\begin{align*}
\begin{array}{ll}
\hat{\psi}_1=\left\{
            \begin{array}{ll}
              1- 6x  -6y + 8x^2 +16xy +8y^2  & \textrm{on}\,\, Q_{1},\\
              0           & \textrm{otherwise},
            \end{array}
            \right.
            &
    \hat{\psi}_2=\left\{
            \begin{array}{ll}
              8x  -16 x^2 - 16xy & \textrm{on}\,\, Q_{1},\\
              0           & \textrm{otherwise},
            \end{array}
            \right.
    \\
[-0.25ex]\\
\hat{\psi}_{12}=\left\{
            \begin{array}{ll}
              8y  -16xy - 16y^2   & \textrm{on}\,\, Q_{1},\\
              0           & \textrm{otherwise},
            \end{array}
            \right.
            &
    \hat{\psi}_{4}=\left\{
            \begin{array}{ll}
              -8  +24x  +8y  -16x^2  -16xy & \textrm{on}\,\, Q_{2},\\
              0           & \textrm{otherwise},
            \end{array}
            \right.
    \\
[-0.25ex]\\
\hat{\psi}_{5}=\left\{
            \begin{array}{ll}
              3  -10x  +8x^2 & \textrm{on}\,\, Q_{2},\\
              0           & \textrm{otherwise},
            \end{array}
            \right.
            &
    \hat{\psi}_{6}=\left\{
            \begin{array}{ll}
              -8y  + 16xy & \textrm{on}\,\, Q_{2},\\
              0           & \textrm{otherwise},
            \end{array}
            \right.
    \\
[-0.25ex]\\
%\end{array}
%\end{align*}
%\begin{align*}
%\begin{array}{ll}
\hat{\psi}_{8}=\left\{
            \begin{array}{ll}
              -8x - 16xy  & \textrm{on}\,\, Q_{3},\\
              0           & \textrm{otherwise},
            \end{array}
            \right.
            &
    \hat{\psi}_{9}=\left\{
            \begin{array}{ll}
              3 - 10y +8y^2 & \textrm{on}\,\, Q_{3},\\
              0           & \textrm{otherwise},
            \end{array}
            \right.
    \\
[-0.25ex]\\
%\end{array}
%\end{align*}
%\begin{align*}
%\begin{array}{ll}
\hat{\psi}_{10}=\left\{
            \begin{array}{ll}
            -8 	-8x  +24y -16xy -16y^2   & \textrm{on}\,\, Q_{3},\\
              0           & \textrm{otherwise},
            \end{array}
            \right.
            &
    \hat{\psi}_{3}=\left\{
            \begin{array}{ll}
              -2x  +8 x^2 +8xy & \textrm{on}\,\, Q_{1},\\
              6  -14x  -6y  +8x^2  +8xy & \textrm{on}\,\, Q_{2},\\
              0           & \textrm{otherwise},
            \end{array}
            \right. 
    \\
[-0.25ex]\\
%\end{array}
%\end{align*}
%\begin{align*}
%\begin{array}{ll}
\hat{\psi}_{7}=\left\{
            \begin{array}{ll}
              6y  -8xy  & \textrm{on}\,\, Q_{2},\\
              6x  -8xy & \textrm{on}\,\, Q_{3},\\
              0           & \textrm{otherwise},
            \end{array}
            \right.
            &
    \hat{\psi}_{11}=\left\{
            \begin{array}{ll}
              -2y  +8xy  +8y^2  & \textrm{on}\,\, Q_{1},\\
              6  -6x  -14y  +8xy   +8y^2   & \textrm{on}\,\, Q_{3},\\
              0           & \textrm{otherwise},
            \end{array}
            \right.
    \\
[-0.25ex]\\
\hat{\psi}_{13} =3 - 4x - 4y \quad \textrm{on}\,\, Q_{4},
        &
        \hat{\psi}_{14} =- 1  +4x  \quad \textrm{on}\,\, Q_{4},
    \\
[-0.25ex]\\
\hat{\psi}_{15}=- 1  + 4y  \quad \textrm{on}\,\, Q_{4}.
\end{array}
\end{align*}

%%%%%%%%%%%%%%
\section{Trial-to-test mapping} 
\label{Appendix:trial-to-test-mapping}

The parametric trial-to-test mapping for FVE-2L schemes ($k$=$2,\,3,\,4$) is defined in (\ref{eq:trial-to-test-mapping-def}),
where \textbf{the parametric transformation matrix} $M_k(\textbf{a},\textbf{b})$ is defined as
\begin{align}\label{eq:M2}
M_2(\textbf{a},\textbf{b})&:=
\left(
%\scriptsize{
\setlength{\arraycolsep}{2.0pt}
\begin{array}{ccccccc}
1 & & & & & &\\
a_1 & a_2 & a_1 & & &&\\
& & 1 & & & &\\
& & a_1 & a_2 & a_1 &&\\
& & & & 1 & &\\
a_1 & & && a_1 &a_2\\
b_1&b_2&b_1&b_2&b_1&b_2&b_3
\end{array}
%}
\right),\\
M_3(\textbf{a},\textbf{b})&:=
\left(
%\scriptsize{
\setlength{\arraycolsep}{2.0pt}
\begin{array}{cccccccccc}
1 & & & & & & & & &\\
a_1& a_2 &a_3 &a_4 & & & & & &\\
a_4& a_3& a_2 &a_1 & & & & & &\\
& & & 1 & & & & & &\\
& & & a_1& a_2 & a_3&a_4 & & &\\
& & & a_4& a_3& a_2 &a_1 & & &\\
& & & & & & 1 & & &\\
a_4& & & & & & a_1& a_2 & a_3&\\
a_1& & & & & & a_4& a_3& a_2 &\\
b_1&b_2&b_2&b_1&b_2&b_2&b_1&b_2&b_2&b_3
\end{array}
%}
\right),    \label{eq:M3}   \\
M_4(\textbf{a},\textbf{b})&:=
\left(
%\scriptsize{
\setlength{\arraycolsep}{2.0pt}
\begin{array}{ccccccccccccccc}
1 & & & & & & & & & & & & & &\\
a_1& a_2 &a_3 &a_4 &a_5 & & & & & & & & & &\\
a_6&a_7 & a_8 & a_7& a_6& & & & & & & & & &\\
a_5& a_4& a_3& a_2 & a_1& & & & & & & & & &\\
& & & & 1 & & & & & & & & & &\\
& & & & a_1& a_2 &a_3 &a_4 &a_5 & & & & &\\
& & & & a_6&a_7 & a_8 & a_7& a_6 & & & & &\\
& & & & a_5& a_4& a_3& a_2 & a_1 & & & & &\\
& & & & & & & & 1 & & & & & &\\
a_5& & & & & & & & a_1& a_2 &a_3 &a_4 && &\\
a_6& & & & & & & & a_6&a_7 & a_8 & a_7& & &\\
a_1& & & & & & & & a_5& a_4& a_3& a_2 & & &\\
b_1 & b_2&b_3 &b_4 &b_5 &b_6 &b_7 &b_6 &b_5 &b_4 &b_3 &b_2 &b_8 &b_9 &b_9\\
b_5& b_4&b_3 &b_2 &b_1  &b_2 &b_3 &b_4 &b_5 &b_6 &b_7 &b_6 & b_9 &b_8 &b_9\\
b_5& b_6&b_7 &b_6 &b_5 &b_4 &b_3 &b_2 &b_1  &b_2 &b_3 &b_4 & b_9& b_9&b_8\\
\end{array}
%}
\right).\label{eq:M4}
\end{align}
Here, $\V{a}$ and $\V{b}$ are the parameters. Their positions in the matrices are based on the symmetry and location of
the interpolation nodes (for the trial space) with respect to the location associated with the test function.
We also require that the trial-to-test mapping reproduce the uniform solutions, i.e., $\hat{u}_h \equiv 1$ implies
$\hat{v}_{h,\mathrm{I}}\equiv 1$ and $\hat{v}_{h,\mathrm{II}}=0$. This implies that the row sums of $M_{k}(\textbf{a},\textbf{b})$ corresponding to the first dual layer
equal to 1 and the row sums corresponding to the second dual layer equal to 0, i.e., 
\begin{align}
\label{eq:restictions_pih}
\begin{array}{ll}
\left\{
            \begin{array}{l}
              a_2=1-2a_1,\\
              b_3=-3b_1-3b_2,
            \end{array}
\right. &\textrm{for}\,\,k=2 ,
    \\
[-0.25ex]\\
\left\{
            \begin{array}{l}
              a_2=1-a_1-a_3-a_4,\\
              b_3=-3b_1-6b_2,
            \end{array}
\right. &\textrm{for}\,\, k=3,
            \\
[-0.25ex]\\
\left\{
            \begin{array}{l}
              a_2=1-a_1-a_3-a_4-a_5,\\
              a_8=1-2a_6-2a_7,\\
              b_8=-b_1-2b_2-2b_3-2b_4-2b_5-2b_6-b_7-2b_9,
            \end{array}
\right. &\textrm{for}\,\, k=4.
    \\
\end{array}
\end{align}

\end{document}